\theoremstyle{plain}
\newtheorem{theorem}{Theorem}[section]
\newtheorem{lemma}[theorem]{Lemma}
\theoremstyle{definition}
\newtheorem{question}[theorem]{Question}
\newtheorem{example}[theorem]{Example}
\theoremstyle{remark}
\numberwithin{equation}{section}
\theoremstyle{definition}
\theoremstyle{definition}
\theoremstyle{definition}
\theoremstyle{definition}
\theoremstyle{definition}
\theoremstyle{definition}
\theoremstyle{remark}
\theoremstyle{definition}
\theoremstyle{definition}
\newcommand{\cal}{\mathcal}
\newcommand{\R}{\mathbb{R}}
\newcommand{\mxa}{\mathcal{M}(X,\mathcal{A})}
\newcommand{\liinf}{L_I^\infty(\mu)}
\newcommand{\umuip}{U_{\mu_I}^+}
\newcommand{\umuipp}{U_{\mu_I}^{++}}
\newcommand{\zi}{  Z[I] }
\newcommand{\mmui}{m_{\mu_I} }
\newcommand{\umui}{U_{\mu_I} }
\newcommand{\imuxa}{I\mu(X,\cal{A})}
\begin{document}	
	\title[A Generalization of $ m $-topology and $ U $-topology on $ \mxa $]{A Generalization of $ m $-topology and $ U $-topology on rings of measurable functions}
			\author{Soumyadip Acharyya}
	\address[Soumyadip Acharyya]{Division of Science, Mathematics, and Engineering, University of South Carolina Sumter, 200 Miller Road,
		Sumter, SC 29150, USA }
	\email {acharyya@mailbox.sc.edu}

	\author{Rakesh Bharati}
	\address[Rakesh Bharati]{Department of Pure Mathematics, University of Calcutta, 35, Ballygunge Circular Road, Kolkata - 700019, INDIA} 
	\email{rbharati.rs@gmail.com}

	\author{Atasi Deb Ray }
	\address[A. Deb Ray]{Department of Pure Mathematics, University of Calcutta, 35, Ballygunge Circular Road, Kolkata - 700019, INDIA} 
	\email{debrayatasi@gmail.com}

	\author{ Sudip Kumar Acharyya}
	\address[Sudip Kumar Acharyya]{Department of Pure Mathematics, University of Calcutta, 35, Ballygunge Circular Road, Kolkata - 700019, INDIA}
	\email{sdpacharyya@gmail.com}

	\thanks{The first author acknowledges that this research was partially supported by the University of South Carolina Sumter 2021 Summer Research Stipend Program. }

	\thanks{The second author acknowledges financial support from University Grants Commission, New Delhi, for the award of research fellowship (F.No. 16-9(June 2018)/2019 (NET/CSIR))}

	\begin{abstract}
		For a measurable space ($X,\mathcal{A}$), let $\mathcal{M}(X,\mathcal{A})$ be the corresponding ring of all real valued measurable functions and let $\mu$ be a measure on ($X,\mathcal{A}$). In this paper, we generalize the so-called $m_{\mu}$ and $U_{\mu}$ topologies on $\mathcal{M}(X,\mathcal{A})$ via an ideal $I$ in the ring $\mathcal{M}(X,\mathcal{A})$. The generalized versions will be referred to as the $m_{\mu_{I}}$ and $U_{\mu_{I}}$ topology, respectively, throughout the paper. $L_{I}^{\infty} \left(\mu\right)$ stands for the subring of $\mathcal{M}(X,\mathcal{A})$ consisting of all functions that are essentially $I$-bounded (over the measure space ($X,\mathcal{A}, \mu$)). Also let $I_{\mu} (X,\mathcal{A}) = \big \{ f \in \mathcal{M}(X,\mathcal{A}) : \, \text{for every} \, g \in \mathcal{M}(X,\mathcal{A}), fg \, \, \text{is essentially} \, I$-$\text{bounded} \big \}$. Then $I_{\mu} (X,\mathcal{A})$ is an ideal in $\mathcal{M}(X,\mathcal{A})$ containing $I$ and contained in $L_{I}^{\infty} \left(\mu\right)$. It is also shown that $I_{\mu} (X,\mathcal{A})$ and $L_{I}^{\infty} \left(\mu\right)$ are the components of $0$ in the spaces $m_{\mu_{I}}$ and $U_{\mu_{I}}$, respectively. Additionally, we obtain a chain of necessary and sufficient conditions as to when these two topologies coincide. In particular, it is proved that they coincide if and only if the three sets $\mathcal{M}(X,\mathcal{A})$, $L_{I}^{\infty} \left(\mu\right)$, and $I_{\mu} (X,\mathcal{A})$ coincide and this holds when and only when $\mathcal{M}(X,\mathcal{A})$, with either topology, is connected, which is further equivalent to each of these two topological spaces being a topological ring as well as a topological vector space. Furthermore, it is easy to verify that $L_{I}^{\infty} \left(\mu\right)$ is a pseudonormed linear space regardless of the choice of $I$. Assuming $I$ is a real maximal ideal in $\mathcal{M}(X,\mathcal{A})$, we show that $L_{I}^{\infty} \left(\mu\right)$ is complete. We construct two examples of spaces of the form $L_{I}^{\infty} \left(\mu\right)$, each of which is not only non-separable but is also based on a non-trivial ideal $I$. 
	\end{abstract}
	\keywords{$ \mmui $-topology, $ \umui $-topology, Essentially $ I $-bounded measurable functions, $ \mu $-stable family, Component, Quasicomponent, Non-separable pseudonormed linear space}
\subjclass[2020]{54C40, 46E30}
	\maketitle
	
	\section{Introduction}
	We start with a measure space $ (X,\cal{A},\mu) $, where $ X $ is a non-empty set, $ \cal{A} $ is a $ \sigma $-algebra of subsets of $ X $ and $ \mu :\cal{A}\rightarrow [0,\infty]$ is a  measure. A function $ f:X\rightarrow \R $ is called $ \cal{A} $-measurable or simply measurable if for any $ \alpha\in \R $, $ f^{-1}(-\infty,\alpha) $ is a member of $ \cal{A} $. Members of $ \cal{A} $ are often called $ \cal{A} $-measurable sets or just measurable sets if there is no chance of any confusion. The collection $ \mxa $ of all measurable functions constitutes a commutative lattice ordered algebra with unity if the operations are defined pointwise on $ X $. It may be mentioned that a number of properties of $ \mxa $ and a few of its chosen subalgebras have already been investigated in \cite{ref2},\cite{ref4},\cite{ref5},\cite{adit}.
	
	Suppose for $ f\in\mxa $ and a positive unit $ u $ of this ring and $\epsilon>0$ in $ \R $, $ m_\mu(f,u)=\{g\in\mxa:|f(x)-g(x)|<u(x)  $ for all $ x $, almost every where on $ X $ with respect to the measure $ \mu $\} and $ U_\mu(f,\epsilon)=\{g\in\mxa: \sup_{x\in X\setminus A_g}|f(x)-g(x)|<\epsilon $ for some $ \cal{A} $-measurable set $ A_g $ with $ \mu(A_g)=0 \}$. Then \{$ m_\mu(f,u):f\in \mxa, u,$ a positive unit in $ \mxa $\} is an open base for a uniquely determined topology, which is called $ m_\mu $-topology on $ \mxa $ in \cite{ref3}. $ \mxa $ with the $ m_\mu $-topology becomes a topological ring. Incidentally the family $ \{U_\mu(f,\epsilon): f\in \mxa,\epsilon>0\} $ turns out to be an open base for a topology called $ U_\mu $-topology on $ \mxa $ in \cite{ref2}. The $ U_\mu $-topology is coarser than the $ m_\mu $-topology on $ \mxa $ and $ \mxa $ equipped with the $ U_\mu $-topology is an additive topological group, which is in general neither a topological ring nor a topological vector space. However it is established in \cite{ref2}, Theorem 3.7 that $ \mxa $ is a topological ring in the $ U_\mu $-topology if and only if it is a topological vector space with respect to the same topology and this precisely happens when and only when each function in $ \mxa $ is essentially bounded on $ X $ with respect to the measure $ \mu $. In the present article our intention is to initiate a further generalization of the $ m_\mu $-topology and the $ U_\mu $-topology both via an ideal  $ I $ in the ring $ \mxa $ and to place the last mentioned facts on a wider setting. To be specific with respect to any such ideal $ I $ in $ \mxa $, we call a function $ f \in \mxa $ a $ \mu_I $-unit if there exists $ Z\in \zi=\{Z(g):g\in I\} $ such that $ \mu\{x\in Z:f(x)=0\}=0 $. Let $ \umui $ be the family of all $ \mu_I $-units in $ \mxa $ and $ \umuip=\{u\in \umui: $ there exists $ Z\in \zi $ and $ E\subseteq Z $ with $ \mu(E)=0 $ such that $ u(x)>0$\}. Let for $ f\in\mxa,\ u\in\umuip $ and $ \epsilon>0 $, $\umui(f,\epsilon)=\{g\in\mxa:\sup_{x\in Z\setminus E}|f(x)-g(x)|<\epsilon $ for some $ Z\in \zi $ and $ E\subseteq Z $ with $ \mu(E)=0 $\} and  $\mmui(f,u)= \{ g\in\mxa:$ there exist $Z\in\zi$ such that $|g(x)-f(x)|<u(x)$ for all $ x\in Z $ almost everywhere\}. It needs some routine computations to prove that the family \{$ \mmui(f,u): f\in\mxa, u\in\umuip $\} is an open base for some topology which we wish to designate as $ \mmui $-topology on $ \mxa $. Furthermore, it is not at all hard to establish that $ \mxa $ becomes a topological ring in the $ \mmui $-topology. Incidentally the family $ \{\umui(f,\epsilon):f\in\mxa,\epsilon>0\} $ also turns out to be an open base for a uniquely determined topology, which we call as $ \umui $-topology on $ \mxa $. $ \mxa $ equipped with the $ \umui $-topology is an additive topological group but it may not be a topological ring. With the special choice $ I=\{0\} $, the $ \mmui $-topology and the $ \umui $-topology reduce to $ m_\mu $-topology and $ U_\mu $-topology respectively described earlier.
	
	We let $ \imuxa =\{f\in\mxa:$ given $ g\in\mxa $ there exists $ Z\in\zi $ and a measurable subset $ E $ of $ Z $ with $ \mu(E) $=0 such that $ f.g $ is bounded on $ Z\setminus E \}$ and $L_I^{\infty}(X,\cal{A},\mu)\equiv \liinf=\{f\in\mxa:$ there exists $ Z\in\zi $ and $ E\subseteq Z $ with $ \mu(E)=0 $ such that $ f $ is bounded on $ Z\setminus E \}$. These two sets play a key role in enabling us to establish a number of main results in the present paper. It turns out that $ \imuxa $ is an ideal in $ \mxa $ and $ I\subseteq \imuxa \subseteq \liinf$. We realize that $ \imuxa $ and $ \liinf $ are the components of $ 0 $ in $ \mxa$ equipped with the $\mmui $-topology and the $\umui$-topology respectively. With the special choice $ I=\{0\} $, the second fact in the last assertion reads: the space $ L^{\infty}(\mu) $ of all essentially bounded measurable functions over the measure space $ (X,\cal{A},\mu) $ is just the component of the function $ 0 $ in the topoloical group $ \mxa $ equipped with the $ U_\mu $-topology. The set $ \imuxa $, though not in general the component of $ 0 $ in $ \mxa $ in the $ \umui $-topology, happens to be the maximal connected ideal in this ring with the same topology. Before narrating the organization of the technical results in this article, we like to mention that similar kind of topologies viz. $ \mmui $-topology and $ \umui $-topology, as generalizations of the well-known $ m $-topology and the topology of uniform convergence also called the $ U $-topology on the ring $ C(X) $ of all real-valued continuous functions on a completely regular Hausdorff topological space $ X $, via an ideal $ I $ in $ C(X) $ are initiated in \cite{ref1} and \cite{ref6} respectively.
	
	In the technical section 2 of this article we establish several facts related to the $ \mmui $-topology on $ \mxa $. As in \cite{ref1}, for any $ f\in \mxa $, the map \begin{alignat*}{2}
				\Phi_f:\R &\rightarrow \mxa\\
	  r&\mapsto r.f
	\end{alignat*}
	is introduced. It is proved that for any $ f\in \imuxa$, $ \Phi_f $ is a continuous map if the $ \mmui $-topology is imposed on $ \mxa $ (Theorem \ref{t-2.4}). This fact together with the result that $ \liinf $ is a clopen subset of $ \mxa $ in the $ \mmui $-topology (Theorem \ref{t-2.2}) yields that $ \imuxa $ is the component of $ 0 $ in the $ \mmui $-topology (Theorem \ref{t-2.5}). We realize that $ \liinf $ becomes a pseudonormed linear space with respect to a natural definition of generalized essential supremum $ ||f||_I^{\infty} $ for functions $ f\in \liinf $. It is easy to verify that the pseudonorm topology on $ \liinf $ is weaker than the subspace topology on this set relative to the $ \mmui $-topology on $ \mxa $. These two topologies coincide if and only if the $ \mmui $-topology becomes connected/ locally connected. Indeed several other necessary and sufficient conditions (each equivalent to the coincidence of these two topologies) involving the sets $ \imuxa $ and $ \liinf $ are also found out (Theorem \ref{t-2.7}). This is followed by a complete description of the connected ideals in $ \mxa $ in the $ \mmui $-topology (Theorem \ref{t-2.8}).
	
	In the technical section 3 of this paper, our main focus is on proving a number of results concerning the $ \umui $-topology on $ \mxa $. Since the map $ \Phi_f $ is proved to be continuous for each $ f\in\imuxa $ in the $ \mmui $-topology, it follows that the same map is also continuous for each $ f\in \imuxa $ if $ \mxa $ is given the $ \umui $-topology as this topology is weaker than the $ \mmui $-topology. However we make a minor improvement of the last assertion by proving that the map $ \Phi_f $ is continuous for each $ \liinf $ in the $ \umui $-topology (Theorem \ref{t-3.1}). This has ultimately led to the fact that $ \liinf $ is the component of $ 0 $ in the space $ \mxa $ with the $ \umui $-topology (Theorem \ref{t-3.3}). It is well-known that the component of $ 0 $ in a topological ring is necessarily an ideal in this ring (Theorem 2.6, \cite{ref6}). However if $ R $ is a ring with a topology $ \tau $ for which $ (R,\tau) $ is just an additive topological group without being a topological ring, then the component of $ 0 $ in $ R $ may not be an ideal in $ R $. A counterexample for this purpose in rings of continuous functions $ C(X) $ is already provided in \cite{ref6}. Here in this article we provide yet another counterexample in rings of measurable functions by choosing  appropriately $ X, \cal{A},\mu $ and the ideal $ I $ in $ \mxa $ (Example \ref{e-3.4}). It is already mentioned in this introductory section that $ \mxa $ equipped with the $ \umui $-topology is in general neither a topological ring nor a topological vector space. It is realized however that the $ \umui $-topology makes $ \mxa $ a topological ring if and only if it renders $ \mxa $ a topological vector space. Such things happen precisely when (amongst other necessary and sufficient conditions) the $ \umui $-topology coincides with the $ \mmui $-topology (Theorem \ref{t-3.6}).

In the technical section 4 of the paper we investigate mainly a few problems concerning the pseudonorm topology of the space $ \liinf $. It is realized that $ \liinf $ is a complete pseudonormed linear space if the filter $ \zi $ of $ \cal{A} $-measurable sets is closed under countable intersection (Theorem \ref{t-4.2}), in particular if $ I $ is a real maximal ideal in $ \mxa $. It is folklore that the space $ L^{\infty}(\mu) $ of all essentially bounded measurable functions over a measure space $ (X,\cal{A},\mu) $ are mostly non-separable (see the comments in page 580 in \cite{ref7}). In the fitness of things, by making suitable special choices of $ X,\cal{A}, \mu $ and non-zero ideals $ I $ in the rings $ \mxa $, we construct two different complete pseudonormed linear spaces $ \liinf $ none of which is separable (Example \ref{e-4.3}, Example \ref{e-4.4}). We conclude this article after showing that with the special choice $ \mu= $ counting measure on the measurable space $ (X,\cal{A}) $ that for two ideals $ I $ and $ J $ in $ \mxa $, $ \mmui $-topology = $ m_{\mu_J} $ topology if and only if $ \umui $-topology = $ U_{\mu_J} $-topology if and only if $ I=J $. We then check that on choosing $ \mu= $ an appropriately defined Dirac measure on $ X $, for any two ideals $ I, J $ in $ \mxa $, $ \umui $-topology = $ U_{\mu_J} $-topology and $ \mmui $-topology = $ m_{\mu_J} $-topology. We like to mention that various other properties enjoyed by the $ m $-topology on the ring $ \mxa $ has been established recently in \cite{adit}.
	
	\section{$ \mmui $-topology on $ \mxa $}
\begin{theorem}\label{t-2.1}
	The set $ \umui $ is open in $ \mxa$ in the $ \mmui $-topology.
\end{theorem}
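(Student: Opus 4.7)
The plan is to show, for each $f\in U_{\mu_I}$, that some basic neighborhood $m_{\mu_I}(f,u)$ is entirely contained in $U_{\mu_I}$, using a multiplier $u$ built directly from $f$.

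First I would unpack the hypothesis: since $f$ is a $\mu_I$-unit, there exists $Z\in Z[I]$ with $\mu(E)=0$, where $E:=\{x\in Z : f(x)=0\}$. The natural candidate is $u:=|f|/2$, which is measurable because $f$ is. On $Z\setminus E$ we have $f(x)\neq 0$, so $u(x)>0$ there; since $\{x\in Z:u(x)=0\}\subseteq E$ has $\mu$-measure zero, this both places $u$ in $U_{\mu_I}$ and witnesses $u\in U_{\mu_I}^+$ with the same $Z$ and $E$.

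Next, I would pick an arbitrary $g\in m_{\mu_I}(f,u)$ and verify $g\in U_{\mu_I}$. By definition there exist $Z'\in Z[I]$ and a null set $E'\subseteq Z'$ such that $|g(x)-f(x)|<u(x)=|f(x)|/2$ for every $x\in Z'\setminus E'$. Set $Z'':=Z\cap Z'$. A routine observation — that $Z[I]$ is closed under finite intersection, since for $h_1,h_2\in I$ one has $Z(h_1)\cap Z(h_2)=Z(h_1^2+h_2^2)$ with $h_1^2+h_2^2\in I$ — yields $Z''\in Z[I]$. On $Z''\setminus(E\cup E')$ the triangle inequality gives
\[
|g(x)|\;\geq\;|f(x)|-|g(x)-f(x)|\;>\;\tfrac{1}{2}|f(x)|\;>\;0,
\]
so $g$ does not vanish on this set. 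Because $\mu(E\cup E')=0$, this shows $\{x\in Z'' : g(x)=0\}$ has $\mu$-measure zero, so $g\in U_{\mu_I}$.

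The argument is essentially a direct triangle-inequality estimate; the only mildly delicate point is the filter-base property of $Z[I]$, which guarantees that the two ``good'' sets $Z$ and $Z'$ can be replaced by a single common element of $Z[I]$. I expect no substantial obstacle beyond careful bookkeeping of the exceptional null sets $E$ and $E'$.
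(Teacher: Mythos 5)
Your proof is correct and follows essentially the same route as the paper: the paper also uses the neighborhood $\mmui(u,\frac{u}{2})$ and the same triangle-inequality estimate, merely reducing first to the positive part $\umuip$ and leaving the inclusion as ``not hard to check,'' whereas you work directly with an arbitrary $f\in\umui$ via the multiplier $|f|/2$ and write out the bookkeeping of $Z\cap Z'$ and the null sets explicitly. No issues.
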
	
	\begin{proof}
		Since $ \umui =\{u\in\mxa:$ there exists $ Z\in \zi $ and $ E\subseteq Z $ with $ \mu(E)=0$, such that $ |u(x)|>0 $ for each $ x\in Z\setminus E \}$, it suffices to show that $ \umuip $ is open in the $ \mmui $-topology. This is indeed the case because for $ u\in\umuip $, it is not hard to check that $\mmui(u,\frac{u}{2})\subseteq\umuip $, from which it follows that $ u $ is an interior point of $ \umuip $ in the $ \mmui $-topology.
	\end{proof}
	\begin{theorem}\label{t-2.2}
		$ \liinf $ is a clopen subset of $ \mxa $ in the $ \mmui $-topology.
	\end{theorem}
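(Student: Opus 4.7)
My plan is to prove simultaneously that $\liinf$ is open and that its complement is open in $\mxa$ under the $\mmui$-topology, both times using the constant function $\mathbf{1}$ as a convenient ``radius'' in the basic neighborhood $\mmui(f,\mathbf{1})$. For this to be legitimate I first note that $\mathbf{1}\in\umuip$: since $0\in I$ we have $X=Z(0)\in\zi$, and $\mathbf{1}(x)>0$ on all of $X$ (with $E=\emptyset$). I will also use once, without further comment, that $\zi$ is closed under finite intersection (because $I$ is an ideal, $Z(g_1)\cap Z(g_2)=Z(g_1^2+g_2^2)$) and that a finite union of $\mu$-null sets is $\mu$-null.

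For openness, fix $f\in\liinf$; then there exist $Z_2\in\zi$ and $E_2\subseteq Z_2$ with $\mu(E_2)=0$ such that $f$ is bounded on $Z_2\setminus E_2$, say $|f(x)|\le M$ there. I claim $\mmui(f,\mathbf{1})\subseteq\liinf$. Indeed, for $g\in\mmui(f,\mathbf{1})$ there exist $Z_1\in\zi$ and $E_1\subseteq Z_1$ with $\mu(E_1)=0$ such that $|g(x)-f(x)|<1$ for every $x\in Z_1\setminus E_1$. Setting $Z=Z_1\cap Z_2\in\zi$ and $E=E_1\cup E_2$, which has $\mu(E)=0$ and is contained in $Z$ (after enlarging $E$ by $(E_1\cup E_2)\cap Z$ if desired), I get $|g(x)|\le|f(x)|+1\le M+1$ on $Z\setminus E$, so $g\in\liinf$.

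For closedness I run the symmetric argument: if $f\notin\liinf$, the same set $\mmui(f,\mathbf{1})$ is disjoint from $\liinf$. Assuming instead $g\in\mmui(f,\mathbf{1})\cap\liinf$, combining the $\mu_I$-bound $|g-f|<1$ on some $Z_1\setminus E_1$ with the essential $I$-boundedness of $g$ on some $Z_2\setminus E_2$ gives, by the triangle inequality on $(Z_1\cap Z_2)\setminus(E_1\cup E_2)$, an essential $I$-bound for $f$, contradicting $f\notin\liinf$.

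There is no substantive obstacle here; the argument is a straightforward application of the triangle inequality. The only points that need a line of care are the verification that $\mathbf{1}\in\umuip$ and the bookkeeping that the intersection of two members of $\zi$ is again in $\zi$ so that the two sources of almost-everywhere information can be amalgamated onto a single set $Z\in\zi$.
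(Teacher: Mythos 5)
Your proof is correct and follows the same route as the paper's: show $\mmui(f,\mathbf{1})\subseteq\liinf$ for $f\in\liinf$ and $\mmui(f,\mathbf{1})\cap\liinf=\emptyset$ for $f\notin\liinf$. The paper leaves these two verifications as ``easy to check''; you have simply supplied the details (that $\mathbf{1}\in\umuip$, the closure of $\zi$ under finite intersection, and the triangle inequality), so there is nothing further to add.
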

\begin{proof}
	That $ \liinf $ is open in $ \mxa $ in the $ \mmui $-topology follows from the easily verifiable result that for $ f\in\liinf $, $ \mmui(f,1)\subseteq \liinf $. On the other hand if $ g\in\mxa\setminus \liinf$, then it is easy to check that $ \mmui(g,1)\cap \liinf=\emptyset $. It follows that $ \liinf $ is a closed set in $ \mxa $ in the $ \mmui $-topology.
\end{proof}
The following result needs only some routine  computations for its proof.
\begin{theorem}\label{t-2.3}
	$ \imuxa $ is an ideal in the ring $ \mxa $ and $ I\subseteq \imuxa\subseteq\liinf $.
\end{theorem}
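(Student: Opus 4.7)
The plan is to verify the three claims separately: that $\imuxa$ is closed under addition and under multiplication by arbitrary elements of $\mxa$, then the two containments. The crucial technical tool throughout will be the fact that $\zi$ is closed under finite intersections. Indeed, for $g_1,g_2\in I$, one has $Z(g_1)\cap Z(g_2)=Z(g_1^2+g_2^2)$ and $g_1^2+g_2^2\in I$, so the intersection lies again in $\zi$. I will invoke this filter-base property whenever I need to combine two members of $\zi$ into one.

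First, for closure under addition, I will take $f_1,f_2\in\imuxa$ and an arbitrary $g\in\mxa$, and produce $Z_i\in\zi$ together with measurable $E_i\subseteq Z_i$ of measure zero on which $f_ig$ is bounded. Setting $Z=Z_1\cap Z_2\in\zi$ and $E=(E_1\cup E_2)\cap Z$, I get $E\subseteq Z$ with $\mu(E)=0$ and $Z\setminus E\subseteq(Z_1\setminus E_1)\cap(Z_2\setminus E_2)$, so $(f_1+f_2)g=f_1g+f_2g$ is bounded on $Z\setminus E$. Closure under negation is trivial, and closure under multiplication by $h\in\mxa$ is even easier: given $f\in\imuxa$ and $g\in\mxa$, apply the defining condition for $f$ to the measurable function $hg\in\mxa$; the resulting $Z\setminus E$ on which $f(hg)=(hf)g$ is bounded witnesses $hf\in\imuxa$.

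For the containment $I\subseteq\imuxa$, I will observe that if $f\in I$, then $Z(f)\in\zi$ and for every $g\in\mxa$ the product $fg$ vanishes identically on $Z(f)$, hence is trivially bounded there; taking $Z=Z(f)$ and $E=\emptyset$ suffices. For $\imuxa\subseteq\liinf$, I will simply feed the constant function $g\equiv1\in\mxa$ into the defining property of $\imuxa$: the resulting $Z,E$ show that $f=f\cdot 1$ is bounded on $Z\setminus E$, which is exactly the definition of membership in $\liinf$.

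No step presents a real obstacle; the only point that requires a moment's care is ensuring, in the additive closure argument, that the exceptional set $E$ is actually a subset of the common zero set $Z$ (so that the membership format demanded by the definition of $\imuxa$ is respected), which is why I intersect with $Z$ at the end. Everything else reduces to the filter-base property of $\zi$ together with elementary manipulations of bounds.
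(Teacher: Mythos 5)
Your proof is correct and complete; the paper itself omits the argument as ``routine computations,'' and your verification (using that $Z(g_1)\cap Z(g_2)=Z(g_1^2+g_2^2)\in Z[I]$ to combine witnesses, applying the defining property of $f$ to $hg$ for absorption, and specializing to $g\equiv 1$ for the inclusion in $\liinf$) is exactly the intended routine argument.
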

In the context of rings of continuous functions $ C(X) $, where $ X $ is a completely regular Hausdorff topological space, for $ f\in C(X) $ the map \begin{alignat*}{2}
	\Phi_f:\R &\rightarrow C(X)\\
	r&\mapsto r.f
\end{alignat*}
is introduced. In the present paper we exploit a few of the properties of an analogous map defined by the same notation $ \Phi_f:\R\rightarrow \mxa $ defined as $ r\mapsto r.f $ to be useful for our later results.
\begin{theorem}\label{t-2.4}
	For each function $ f\in\imuxa $, the map $ \Phi_f $ is continuous if $ \mxa $ is given the $ \mmui $-topology.  $ ( $A special case of this result with $ I=\{0\} $ and $ \mu =$ counting measure is already proved in \cite{adit}$ ) $. 
\end{theorem}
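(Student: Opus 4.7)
The plan is to verify continuity of $\Phi_f$ directly at an arbitrary point $r_0\in\R$ by producing, for every basic neighborhood $\mmui(r_0 f,u)$ of $r_0 f$ in the $\mmui$-topology, a $\delta>0$ such that $|r-r_0|<\delta$ forces $r f\in \mmui(r_0 f,u)$. Unwinding definitions, what I need is $Z\in\zi$ and a measurable $E\subseteq Z$ with $\mu(E)=0$ on which $|r-r_0|\,|f(x)|<u(x)$ holds, i.e.\ I must control $|f|/u$ on some member of $\zi$ up to a null set.

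The mechanism for this control is the assumption $f\in\imuxa$. First I would use that $u\in\umuipp$ to pick $Z_1\in\zi$ and a measurable $E_1\subseteq Z_1$ with $\mu(E_1)=0$ such that $u(x)>0$ on $Z_1\setminus E_1$. Then I would define a measurable function $h\in\mxa$ by $h(x)=1/u(x)$ on $Z_1\setminus E_1$ and $h(x)=0$ elsewhere; this is measurable because $Z_1\setminus E_1\in\cal{A}$ and $1/u$ is measurable where it is defined. Applying the defining property of $\imuxa$ to $g=h$, there exist $Z_2\in\zi$, a measurable $E_2\subseteq Z_2$ with $\mu(E_2)=0$, and a constant $M\ge 0$ such that $|f(x)h(x)|\le M$ on $Z_2\setminus E_2$. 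Putting $Z=Z_1\cap Z_2$ (which lies in $\zi$ since $\zi$ is closed under finite intersections, as $I$ is an ideal) and $E=E_1\cup E_2$, I obtain $|f(x)|\le Mu(x)$ on $Z\setminus E$ with $\mu(E)=0$.

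With this estimate in hand, I would choose $\delta=\tfrac{1}{M+1}$. For $|r-r_0|<\delta$ and $x\in Z\setminus E$,
\[
|rf(x)-r_0 f(x)|=|r-r_0|\,|f(x)|\le |r-r_0|\,Mu(x)<u(x),
\]
the last inequality using $u(x)>0$ on $Z\setminus E$ and $|r-r_0|M<1$. This says exactly $rf\in\mmui(r_0 f,u)$, so $\Phi_f$ is continuous at $r_0$.

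The only slightly subtle step is the construction of the auxiliary measurable function $h$ that inverts $u$ on the controlled set; once this is in place, invoking the definition of $\imuxa$ is what pins down the needed pointwise bound $|f|\le Mu$ up to a null set, and the rest is a one-line estimate. I do not anticipate a genuine obstacle, but care must be taken to ensure the chosen $Z$ lies in $\zi$ (rather than merely containing one), which is why I would explicitly record that $\zi$ is a filter base closed under finite intersections.
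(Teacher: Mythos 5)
Your proof is correct and follows essentially the same route as the paper's: both invert $u$ off the exceptional set to obtain a measurable function to which the defining property of $\imuxa$ is applied (the paper uses the positive unit $|u|+\chi_{(X\setminus Z)\cup A}$ where you use $1/u$ extended by $0$), deduce $|f|\le Mu$ on a member of $\zi$ up to a $\mu$-null set, and finish with the same $\delta$-estimate. The only (harmless) slip is writing $u\in\umuipp$ where the basic neighbourhoods only require $u\in\umuip$; the property you actually invoke is exactly that of $\umuip$, so the argument is unaffected.
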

\begin{proof}
	Let $ f\in \imuxa $ and $ r\in \R $ be arbitrary. Then a typical basic open neighbourhood of $ r.f $ in $ \mxa $ in the $ \mmui $-topology is of the form $ \mmui(rf,u) $, where $ u\in \umuip $. Accordingly there exists $ Z\in \zi $ and $ A\in \cal{A} $ with $ A\subseteq Z $ and $ \mu(A)=0 $ for which $ u(x)>0 $ for each $ x $ in $ Z\setminus A $. Set $ g=|u|+\chi_{(X\setminus Z)\cup A} $, here $ \chi_E $ stands for the characteristic function of a measurable subset $ E $ of $ X $. Then $ g $ is a positive multiplicative unit in the ring $ \mxa $. Therefore $ \frac{1}{g}\in\mxa $. The hypothesis $ f\in \imuxa $ implies that $ \frac{f}{g} $ is bounded almost everywhere (with respect to the measure $\mu$) on some zero-set $ Z^{'} \in\zi$. This means that there exists $ B\in\cal{A} $ with $ \mu(B)=0 $ such that $ B\subseteq Z^{'} $ for which $ |\frac{f}{g}|\leq\lambda $ on $ Z^{'}\setminus B $ for some $\lambda>0$ in $ \R $. Let $ s\in (r-\frac{1}{\lambda}, r+\frac{1}{\lambda}) $. Then on the set $ Z\cap Z^{'} \setminus (A\cup B) $, $ |\phi_f(r)-\phi_f(s)|=|s-r||f|\leq\frac{1}{\lambda} .|\frac{f}{g}|.|g|$$\leq\frac{1}{\lambda}.\lambda.|u|=|u|$. Since $ Z\cap Z^{'}\in \zi $ and $ \mu(A\cup B)=0 $. It follows that $ \phi_f(s)\in \mmui(\phi_f(r),u) $. This ensures the continuity of the map $\phi_f$ at the point $ r\in \R $, chosen arbitrarily. We are now ready to prove the first main technical result in this section.
\end{proof}
\begin{theorem}\label{t-2.5}
	$ \imuxa $ is the component of $ 0 $ of the ring $ \mxa $ in the $ \mmui $-topology.
\end{theorem}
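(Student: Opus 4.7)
The plan is to combine Theorems \ref{t-2.2}, \ref{t-2.3} and \ref{t-2.4} with the fact, already flagged in the introduction, that the component of $0$ in a topological ring is automatically an ideal. Let $C_0$ denote the $\mmui$-component of $0$ in $\mxa$.

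First I would establish the inclusion $\imuxa \subseteq C_0$ by showing that $\imuxa$ itself is $\mmui$-connected. Fix any $f \in \imuxa$. Theorem \ref{t-2.4} tells us that $\Phi_f : \R \to \mxa$ is $\mmui$-continuous, so $\Phi_f(\R) = \{rf : r \in \R\}$ is a connected subset of $\mxa$. Because $\imuxa$ is an ideal by Theorem \ref{t-2.3}, this ``line'' $\Phi_f(\R)$ is entirely contained in $\imuxa$, and it contains both $0 = \Phi_f(0)$ and $f = \Phi_f(1)$. Hence $\imuxa = \bigcup_{f \in \imuxa} \Phi_f(\R)$ is a union of connected subsets all sharing the common point $0$, which forces connectedness.

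For the reverse inclusion $C_0 \subseteq \imuxa$, I would invoke two facts in tandem. Since $\mxa$ equipped with the $\mmui$-topology is a topological ring, the component $C_0$ is an ideal of $\mxa$. On the other hand, Theorem \ref{t-2.2} guarantees that $\liinf$ is $\mmui$-clopen and contains $0$, so the connected set $C_0$ must lie inside $\liinf$. Combining these two observations: for any $f \in C_0$ and any $g \in \mxa$ we have $fg \in C_0 \subseteq \liinf$, so $fg$ is essentially $I$-bounded on $X$. This is precisely the defining condition of membership in $\imuxa$, and therefore $f \in \imuxa$.

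The two inclusions together give $C_0 = \imuxa$. I do not anticipate any real obstacle, as the analytical input has been absorbed into Theorem \ref{t-2.4} (continuity of the scalar-multiplication maps $\Phi_f$ for $f \in \imuxa$) and Theorem \ref{t-2.2} (clopen-ness of $\liinf$); the remainder is formal manipulation of ideals and connectedness, together with the cited standard fact that components of zero in topological rings are ideals.
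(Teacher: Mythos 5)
Your proposal is correct and follows essentially the same route as the paper: connectedness of $\imuxa$ via the continuous maps $\Phi_f$ for the inclusion $\imuxa\subseteq C_0$, and then the clopenness of $\liinf$ together with the fact that the component of $0$ in the topological ring $(\mxa,\mmui)$ is an ideal for the reverse inclusion. The only difference is cosmetic: the paper phrases the second half as a contradiction with a hypothetical larger connected ideal $J$, whereas you argue the inclusion $C_0\subseteq\imuxa$ directly, but the ingredients are identical.
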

\begin{proof}
	It is trivial that $ \imuxa=\bigcup\limits_{f\in\imuxa} \phi_f(\R)=$ the union of a family of connected subsets of $ \mxa $ in the $ \mmui $-topology each of which contains the function $ 0 $. Hence $ \imuxa $ is a connected ideal in $ \mxa $. To show that $ \imuxa $ is the component of $ 0 $ in  $ \mxa $ (equivalently the maximal connected ideal in $ \mxa $), if possible let $ J $ be a connected ideal in $ \mxa $ with $ \imuxa\subsetneq J $. Choose $ f\in J\setminus \imuxa $. Then there exists $ g\in \mxa $ such that for each $ Z\in \zi $ and for each $ M>0 $ in $ \R $, $ \mu\{t\in Z: |f(t)g(t)|>M\}>0 $. Consequently $ f.g\notin \liinf $. But we note that $ f.g\in J $, since $ J $ is an ideal in $ \mxa $. So we can write $ J=(J\cap\liinf)\cup (J\setminus\liinf) =$ the union of two non-empty disjoint open sets in the space $ J $. We exploit the clopenness of the set $ \liinf $ in $ \mxa $ in the $ \mmui $-topology to prove the last assertion. This contradicts the connectedness of $ J $ in $ \mxa $.
\end{proof}
An $ f\in \mxa $ is called essentially $ I$-bounded on $ X $ if there exists $ Z\in\zi $, a measurale subset $ E $ of $ Z $ with $ \mu(E)=0 $ and a real number $ \lambda >0$ such that $ |f(x)|\leq\lambda $ holds for all $ x\in Z\setminus E $. Any such $ \lambda $ is called an essential $ I $-upper bound of $ f $ over $ X $. With $ I=\{0\} $, essential $ I $-upper bound coincides with essential upper bound. Thus $ \liinf $ is precisely the family of all essentially $ I $-bounded functions (over $ X $) in the ring $ \mxa $. For each $ f\in \liinf $, let $A_f $ be the aggregate of all essential $ I $-upper bounds of $ f $ over $ X $. Suppose $ ||f||_I^{\infty} =$ infimum of the set $ A_f $. It is clear that $ ||f||_I^{\infty}\geq0 $, and if $ f=0 $ then $ ||f||_I^{\infty} =0$. Also if $ f,g\in \liinf $ and $ l\in A_f $ and $ m\in A_g $, then there exist $ Z_1 , Z_2\in\zi$ and measurable sets $ E_2,E_2 $ with $ E_1\subseteq Z_1,\ E_2\subseteq Z_2 $ such that $ \mu(E_1)=0=\mu(E_2) $ and $ |f(x)|\leq l $ for all $ x\in Z_1\setminus E_1 $ and $ |g(x)|\leq m $ for all $ x\in Z_2\setminus E_2 $. It follows that for all $ x\in (Z_1\cap Z_2)\setminus (E_1\cup E_2) $, $ |(f+g)(x)|\leq l+m $ with $ Z_1\cap Z_2\in Z[J] $ and $ \mu(E_1\cup E_2)=0 $. Thus $ l+m $ is an essential $ I $-upper bound of $ f+g $ over $ X $ and therefore $ ||f+g||_I^{\infty}\leq l+m $. Since $ l\in A_f $ and $ m\in A_g $ are chosen arbitrarily, this implies that $ ||f+g||_I^{\infty}\leq$ Inf$A_f +$ Inf$A_g=||f||_I^{\infty}+||g||_I^{\infty} $. Finally it is routine to check for any $ c\in\R $ and $ f\in\liinf $ that $ ||cf||_I^{\infty}=|c|||f||_I^{\infty} $. Thus we have already proved the following fact.
\begin{theorem}\label{t-2.6}
	$ (\liinf,||.||_I^{\infty}) $ is a pseudonormed linear space over $ \R $.
\end{theorem}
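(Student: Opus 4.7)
The plan is to verify the three defining properties of a pseudonorm for the functional $||\cdot||_I^{\infty}$ on $\liinf$, relying on the fact that $\zi$ is a filter of $\cal{A}$-measurable sets, i.e.\ closed under finite intersection, which holds because $I$ is an ideal in $\mxa$. Before addressing the axioms, I would briefly confirm that $\liinf$ is a genuine linear subspace of $\mxa$: if $f,g \in \liinf$ are essentially $I$-bounded on $Z_1\setminus E_1$ and $Z_2\setminus E_2$ respectively, with $Z_1,Z_2 \in \zi$ and $\mu(E_1)=\mu(E_2)=0$, then $f+g$ is bounded on $(Z_1 \cap Z_2)\setminus(E_1\cup E_2)$, a set of the required form; closure under scalar multiplication is immediate.

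I would then verify the pseudonorm axioms in the order indicated in the paragraph preceding the theorem. Non-negativity $||f||_I^{\infty}\geq 0$ is immediate since $A_f \subseteq [0,\infty)$, and $||0||_I^{\infty}=0$ because every positive real lies in $A_0$, forcing $\inf A_0 = 0$. For absolute homogeneity $||cf||_I^{\infty}=|c|\cdot ||f||_I^{\infty}$, the case $c=0$ is trivial, and for $c\neq 0$ one observes that $\lambda \in A_f$ if and only if $|c|\lambda \in A_{cf}$, so the map $\lambda \mapsto |c|\lambda$ is a bijection between $A_f$ and $A_{cf}$; taking infima yields the identity.

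The central step is the triangle inequality, and it is precisely where the filter property of $\zi$ is used. Given arbitrary $l\in A_f$ and $m\in A_g$, witness them by pairs $(Z_1,E_1)$ and $(Z_2,E_2)$ as above. On $(Z_1 \cap Z_2)\setminus(E_1\cup E_2)$ one has $|(f+g)(x)|\leq l+m$, and since $Z_1\cap Z_2 \in \zi$ while $\mu(E_1\cup E_2)=0$, this shows $l+m \in A_{f+g}$, hence $||f+g||_I^{\infty}\leq l+m$. Taking the infimum over $l \in A_f$ followed by the infimum over $m \in A_g$ delivers $||f+g||_I^{\infty}\leq ||f||_I^{\infty}+||g||_I^{\infty}$.

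There is no serious obstacle here; the entire argument is a routine manipulation of essential $I$-upper bounds, and it has essentially been written out already in the paragraph that motivates the theorem. The only conceptual point worth flagging is the use of the filter property of $\zi$, which is what lets us simultaneously restrict to a common set in $\zi$ when combining essential $I$-upper bounds of two functions. Note that the result is only a pseudonorm rather than a norm, since $||f||_I^{\infty}=0$ does not force $f=0$: it only forces $f$ to vanish almost everywhere on some member of $\zi$.
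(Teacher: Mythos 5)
Your proposal is correct and follows essentially the same route as the paper, which proves this theorem in the paragraph immediately preceding its statement: non-negativity and $||0||_I^{\infty}=0$ are immediate, the triangle inequality is obtained by intersecting the two witnessing sets from $\zi$ and unioning the null sets so that $l+m$ becomes an essential $I$-upper bound of $f+g$, and homogeneity is left as routine. Your additional remarks (the linear-subspace check, the bijection $\lambda\mapsto|c|\lambda$ between $A_f$ and $A_{cf}$, and the observation that one only gets a pseudonorm) are harmless elaborations of the same argument.
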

Since for $ f\in \liinf $ and $ \delta>0 $ in $ \R $, $ \mmui(f,\delta) \subseteq \{g\in \liinf: ||g-f||_I^{\infty}<\delta\}$, it follows that the pseudonorm topology on $ \liinf $ is weaker than the relative topology on $ \liinf $ induced by the $ \mmui $-topology on $ \mxa $.

The following theorem elaborates in precise terms, when these two topologies on $ \liinf $ coincide.
\begin{theorem}\label{t-2.7}
	The following statements are equivalent.\begin{enumerate}
		\item $ \liinf=\mxa $.
		\item$ \imuxa=\mxa $.
		\item The pseudonorm topology on $ \liinf $ is identical to the relative $ \mmui $-topology on it.
		\item The $ \mmui $-topology on $ \mxa $ is connected.
		\item The $ \mmui $-topology on $ \mxa$ is locally connected.
		\item $ \zi $ is a $ \mu $-stable set in the sense that given $ f\in\mxa $, there exists $ Z\in\zi $ and $ E\subseteq Z $ with $ \mu(E)=0 $ such that $ f $ is bounded on $ Z\setminus E $.
				\item $ \mxa $ with $ \mmui $-topology is a topological vector space.
	\end{enumerate}
\end{theorem}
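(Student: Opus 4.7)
The plan is to establish a wheel of implications centered on statement~(1), exploiting the chain $\imuxa\subseteq\liinf\subseteq\mxa$ of Theorem~\ref{t-2.3} together with the structural results already at hand (Theorems~\ref{t-2.2}, \ref{t-2.5}, \ref{t-2.6}). The equivalences $(1)\Leftrightarrow(2)\Leftrightarrow(6)$ are essentially bookkeeping: (6) is just the definition of ``$\liinf=\mxa$'' unpacked; the containment $\imuxa\subseteq\liinf$ gives $(2)\Rightarrow(1)$; and if $\mxa=\liinf$ then for every $f,g\in\mxa$ the product $fg$ lies in $\liinf$, placing $f$ in $\imuxa$.

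For $(1)\Leftrightarrow(4)$ I would combine Theorems~\ref{t-2.2} and~\ref{t-2.5}: if $\mxa$ is connected and $\liinf$ is a nonempty clopen subset, then $\liinf=\mxa$; conversely if $\liinf=\mxa$ then $\imuxa=\mxa$ is the component of $0$ by Theorem~\ref{t-2.5}, hence connected. For $(1)\Leftrightarrow(7)$: once (3) is known, (1) identifies the $\mmui$-topology with the pseudonorm topology of Theorem~\ref{t-2.6}, which is a TVS topology; conversely a TVS structure makes every $\Phi_f:\R\to\mxa$ continuous, so $\mxa=\bigcup_{f\in\mxa}\Phi_f(\R)$ is a union of connected sets through $0$, hence connected, delivering (4) and therefore (1).

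The heart of the argument is $(1)\Leftrightarrow(3)$. For $(3)\Rightarrow(1)$ I plan to probe an arbitrary $f\in\mxa$ using the positive $\mu_I$-unit $u=1/(1+|f|)$: by (3) there is $\delta>0$ such that the pseudonorm $\delta$-ball in $\liinf$ lies inside $\mmui(0,u)\cap\liinf$, and testing with the constant function $\delta/2$ yields $\delta/2<1/(1+|f|)$ almost everywhere on some $Z\in\zi$, whence $|f|<2/\delta-1$ there, so $f\in\liinf$. For $(1)\Rightarrow(3)$, given $u\in\umuip$ positive on $Z\setminus E$, I extend to a strictly positive $\tilde u\in\mxa$ by setting $\tilde u=1$ off $Z\setminus E$; then $1/\tilde u\in\mxa=\liinf$, so $1/\tilde u\le M$ on some $Z'\in\zi$ away from a null set, giving $u\ge 1/M$ on the good part of $Z\cap Z'$, which places the pseudonorm $(1/M)$-ball inside $\mmui(0,u)$.

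Finally, for $(1)\Leftrightarrow(5)$: if (1) holds, then by (3) the $\mmui$-topology is the pseudonorm topology, whose open balls are convex and hence connected, so it is locally connected. For $(5)\Rightarrow(1)$, local connectedness together with Theorem~\ref{t-2.5} forces $\imuxa$ to be open, hence $\mmui(0,u')\subseteq\imuxa$ for some $u'\in\umuip$; for any $f\in\mxa$ the function $h=fu'/(1+f^2)$ satisfies $|h|\le u'/2<u'$ on $Z\setminus E$, so $h\in\mmui(0,u')\subseteq\imuxa$, and multiplying $h$ by $(1+f^2)/\tilde u'$ (with $\tilde u'$ the strictly positive extension of $u'$) produces a member of $\liinf$ agreeing with $f$ on $Z\setminus E$, forcing $f\in\liinf$. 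The main obstacle I anticipate is the measurable-set bookkeeping — threading the various $Z$'s and null sets consistently and making sure reciprocals of $\mu_I$-units can always be replaced by genuine members of $\mxa$ without losing the essential-$I$-bound estimates.
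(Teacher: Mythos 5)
Your proposal is correct and, in substance, follows the paper's own route: the clopenness of $\liinf$ (Theorem \ref{t-2.2}), the component description of Theorem \ref{t-2.5}, the device of extending a $\mu_I$-unit $u$ to an everywhere-positive unit and inverting it inside $\liinf=\mxa$, the $fu'/(1+f^2)$-type trick for local connectedness, and the test of a pseudonorm ball by a small constant function are exactly the paper's tools, merely reorganized around statement (1) instead of the paper's mixed graph of implications (your $(3)\Rightarrow(1)$ with $u=1/(1+|f|)$ is the direct form of the paper's contrapositive, same computation). The one genuine divergence is the return from (7): the paper argues contrapositively by exhibiting a point $(r,\underline{s})$ at which scalar multiplication fails to be continuous when $\liinf\subsetneq\mxa$, whereas you observe that a topological vector space structure makes every $\Phi_f$ continuous, so $\mxa=\bigcup_{f}\Phi_f(\R)$ is connected, giving (4) and hence (1); your version is shorter and avoids the paper's $\epsilon$-bookkeeping. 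I see no gaps; the only points needing care in a write-up are the ones you already flag (replacing $\|g\|_I^{\infty}<\lambda$ by an actual essential $I$-upper bound strictly below $\lambda$, and intersecting the various $Z$'s while unioning the null sets).
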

\begin{proof}
	$ (1)\Rightarrow (3):$ Let (1) be true. In view of the last observation preceeding Theorem \ref{t-2.7}, it suffices to show that the relative $ \mmui $-topology on $ \liinf $ is weaker than the pseudonorm topology on this set. Choose $ f\in \liinf $ and $ u\in U_I^{+} $. Then there exists $ Z\in\zi $ and $ A\in \cal{A} $, $ A\subseteq Z $ with $ \mu(A)=0 $ such that $ u(x)>0 $ for each $ x\in Z\setminus A $.
	Set $ v(x)=u(x) $ if $ x\in Z\setminus A $ and $ v(x)=1 $, otherwise, $x\in X$. Then $ v\in \mxa $ and is a positive unit in the ring. So $ \frac{1}{v}\in \mxa=\liinf $. There exists therefore $ M>0 $, $ Z^{'}\in\zi $ and $ A^*\subseteq Z^{'} $ with $ \mu(A^*)=0 $ for which for each $ x\in Z^{'}\setminus A^*, |\frac{1}{v(x)}|\leq M $. This implies that for each $ x\in (Z\cap Z^{'})\setminus(A\cup A^*) $, $ u(x)=v(x)\geq \frac{1}{M}=\lambda $, say. Now choose $ g\in\liinf $ with $ ||g-f||_I^{\infty}<\lambda $. Then there exists $ Z_1\in\zi $ and $ A_1\subseteq Z_1 $ with $ \mu(A_1)=0 $ for which we can write $ |g(x)-f(x)|<\lambda $ for each $ x\in Z_1\setminus A_1 $. This implies that for each $ x\in (Z_1\cap Z\cap Z^{'})\setminus (A\cup A_1\cup A^*) $, $ |g(x)-f(x)|<u(x) $ and $ Z_1\cap Z\cap Z^{'}\in\zi $ and also $ \mu(A\cup A_1\cup A^*)=0 $. Thus it follows that $ g\in \mmui(f,u) $, consequently $ \{g\in \liinf:||g-f||_I^{\infty}<\lambda\}\subseteq \mmui(f,u) $.\\
	$ (3)\Rightarrow (1):$ Suppose $ (1) $ is false. Choose $ f\in\mxa $ such that $ f\notin \liinf $ and $ f\geq 1 $. Then for each $ Z\in\zi $ and $ n\in\mathbb{N} $, $ \mu\{t\in Z: f(t)>n\}>0 $ and hence $ \mu\{t\in Z:g(t)<\frac{1}{n}\}>0 $\ ... $ (*) $, on writing $ g(t)=\frac{1}{f(t)},\ t\in X $. Since $ g\in\liinf $, we have then $ \mmui(0,g)\subseteq \liinf $. Thus $ \mmui(0,g) $ is a neighbourhood of $ 0 $ in the relative $ \mmui $-topology on $ \liinf $. To show that $ (3) $ is false, we shall show that $ \mmui(0,g) $ is not a neighbourhood of $ 0 $ in the pseudonorm topology on $ \liinf $. Indeed for any $ \epsilon>0 $ and $ Z\in\zi $, it follows from the relation $ (*) $ above that $ \frac{\epsilon}{2}>g $ on each set $ Z\in\zi $ with positive $ \mu $-measure. Consequently $ \frac{\epsilon}{2}\notin\mmui(0,g) $. Thus $ \frac{\epsilon}{2}\in \{h\in\liinf:||h||_I^{\infty}<\epsilon\}\setminus \mmui(0,g)$ and we are through. Thus the statements $ (1) $ and $ (3) $ are equivalent.\\
	$ (1)\Rightarrow (4)$ is immediate because a pseudonorm topology is path connected and therefore connected.\\
 $ (4)\Rightarrow (1) $ follows from Theorem \ref{t-2.2}.\\
	$ (2)\Rightarrow (5) :$ Let $ (2) $ be true and $\epsilon>0$ be arbitrary. It suffices to show that $ \mmui(0,\epsilon) $ is connected in the $ \mmui $-topology. Indeed for each $ f\in\mxa $, by $ (2) $, $ f\in \imuxa $. Consequently by Theorem \ref{t-2.4}, $ \phi_f $ is a continuous map. Hence $ \mmui(0,\epsilon) =\bigcup\limits_{f\in\imuxa}\phi_f[0,1]=$ the union of a family of connected subsets of $ \mxa $ (in the $ \mmui $-topology), each of which contains the function $ 0 $. Thus $ \mmui(0,\epsilon) $ is connected in the $ \mmui $-topology.\\
	$ (5)\Rightarrow(2) $. Let $ (2) $ be false. Choose $ f\in \mxa\setminus\imuxa $. To show that $ (5) $ is false, it is sufficient to show that there does not exist any open connected subset of $ \mxa $ containing $ 0 $ (in the $ \mmui $-topology). If possible let $ G $ be an open connected subset of $ \mxa $ in the $ \mmui $-topology containing $ 0 $. Since from Theorem \ref{t-2.5}, $ \imuxa $ is the maximal connected subset of $ \mxa $ containing $ 0 $ in this topology, we can write $ 0\in \mmui(0,u)\subseteq G\subseteq \imuxa $ for some $ u\in\umuip $. Therefore $ u(x)>0 $ for each $ x\in Z\setminus A $ for some $ Z\in \zi $ and $ A\subseteq Z $ with $ \mu(A)=0 $.\\
	Define $ v:X\rightarrow \R $ as follows $ v(x)=u(x) $ if $ x\in Z\setminus A $ and $ v(x)=1 $ otherwise. Then $ v\in \mxa $ and is a positive unit in $ \mxa $. we observe that $ \frac{f}{1+|f|}.v\in\mmui(0,v)\subseteq \mmui(0,u)\subseteq G\subseteq\imuxa $. Since $ \imuxa $ is an ideal in $ \mxa $, it follows that $ f\in \imuxa $, a contradiction.\\
	$ (2)\Leftrightarrow (6) $ follows by employing routine arguments.\\
	$ (4)\Rightarrow (7) $: This follows from the equivalence $ (1)\Leftrightarrow (3) $ and $ (1)\Leftrightarrow (4) $ and the fact that any pseudonormed linear space is a topological vector space.\\
	$ (7)\Rightarrow (4) $:  Let $ \liinf\subsetneq\mxa $. Choose $ f\in\mxa $ such that $ f\notin \liinf $. Without loss of generality we can assume that $ f\geq 1 $ on $ X $. Now for all $z\in \zi $ and $n\in \mathbb{N} $, we should have $ \mu\{x\in Z: f(x)>n\}>0\Rightarrow \mu\{x\in Z:g(x)<\frac{1}{n}\}>0 $ where $ g=\frac{1}{f}\ldots (*)$. We claim that for any two $ r,s\in \R,\ r\neq0,\ s\neq0 $, the scalar multiplication function  \begin{alignat*}{2}
		\psi:\R\times \mxa&\rightarrow \mxa\\
		(\alpha,f)&\mapsto \alpha.f
	\end{alignat*}
	is not continuous at the point $ (r, \underline{s}) $, here $ \underline{s} $ is the function on $ X $ whose value is constantly equal to $ s $. If possible let $ \psi $ be continuous at $ (r,\underline{s}) $. Then $ \mmui(\underline{rs}, g) $ is an open neighbourhood of $ \underline{rs} $ in the $ \mmui $-topology in $ \mxa $. So $\exists$ $\lambda>0$ and a $ v\in\umuip $ such that $ (r-\lambda, r+\lambda).\mmui(\underline{s},v)\subseteq\mmui(\bar{rs},g) $. In particular for any $ t: r-\lambda<t<r+\lambda $, $ \underline{ts}\in \mmui(\underline{rs},g) $. Thus $ |\underline{ts-rs}|(x)<g(x)$ for all $x $ almost everywhere with respect to $\mu$ on some $ Z\in\zi $, i.e., $g(x)>|(t-r)s|$ for all $x\in Z$ almost everywhere with respect to $\mu$. Choose $ t\neq r $ anywhere in $ (r-\lambda,r+\lambda) $ then $ g(x)>(t-r)s$ for all $x\in Z$ almost everywhere with respect to $\mu$. This implies $ g(x)>\frac{1}{n}$ for all $x\in Z $ for some $ n\in \mathbb{N} $, almost everywhere with respect to $\mu \Rightarrow \mu\{x\in Z:g(x)\leq\frac{1}{n}\}=0$, a contradiction to the relation $ (*) $ obtained earlier. 
\end{proof}
The following theorem determines the exact class of connected ideals in $ \mxa $ in the $ \mmui $-topology.
\begin{theorem}\label{t-2.8}
	The following statements are equivalent for an ideal $ J $ in the ring $ \mxa $.\begin{enumerate}
		\item $ J $ is a connected ideal in the $ \mmui $-topology.
		\item $ J \subseteq\liinf$.
		\item $ J\subseteq \imuxa $.
	\end{enumerate}
\end{theorem}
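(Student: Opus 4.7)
The plan is to prove the three statements equivalent via the cycle $(1) \Rightarrow (2) \Rightarrow (3) \Rightarrow (1)$, leveraging the two key earlier results: the clopenness of $\liinf$ from Theorem \ref{t-2.2} and the continuity of $\Phi_f$ for $f \in \imuxa$ from Theorem \ref{t-2.4}.

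For $(1) \Rightarrow (2)$, I would argue by contradiction: assuming $J$ is connected but contains some $f \in \mxa \setminus \liinf$, I would partition $J = (J \cap \liinf) \cup (J \setminus \liinf)$. Since $\liinf$ is clopen in the $\mmui$-topology by Theorem \ref{t-2.2}, both parts are relatively clopen in $J$; they are nonempty because $0 \in J \cap \liinf$ and $f$ lies in the other piece. This contradicts connectedness. The implication $(2) \Rightarrow (3)$ is then a one-line verification exploiting the ideal structure: for any $f \in J$ and any $g \in \mxa$, we have $fg \in J \subseteq \liinf$, so $fg$ is essentially $I$-bounded, which is exactly the defining property of $\imuxa$.

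The remaining implication $(3) \Rightarrow (1)$ is where the real work sits. Given $J \subseteq \imuxa$, I would write $J = \bigcup_{f \in J} \Phi_f(\R)$. The key point is that for each $f \in J$, the image $\Phi_f(\R)$ is genuinely contained in $J$ (not merely in $\imuxa$), because $J$ is an ideal in $\mxa$ and the constant function $r$ belongs to $\mxa$, so $rf = \Phi_f(r) \in J$ for every $r \in \R$. Then Theorem \ref{t-2.4} makes $\Phi_f$ continuous into $\mxa$ with the $\mmui$-topology, so each $\Phi_f(\R)$ is a connected subset of $J$ containing the common point $0 = \Phi_f(0)$; the union is therefore connected.

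The main subtlety, rather than a real obstacle, is the one just flagged in the $(3) \Rightarrow (1)$ step: one cannot simply invoke Theorem \ref{t-2.5} (which says $\imuxa$ is connected) to conclude that its subset $J$ is connected, since connectedness is not hereditary. The ideal hypothesis on $J$ is essential precisely because it lets the continuous curves $\Phi_f(\R)$ remain inside $J$ and thereby exhibit $J$ itself as a union of connected sets through a common point.
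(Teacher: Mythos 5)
Your proposal is correct and follows essentially the same route as the paper: $(1)\Rightarrow(2)$ via the clopenness of $\liinf$ from Theorem \ref{t-2.2}, $(2)\Rightarrow(3)$ via the ideal property, and $(3)\Rightarrow(1)$ by writing $J=\bigcup_{f\in J}\Phi_f(\R)$ and invoking Theorem \ref{t-2.4}. Your explicit observation that $\Phi_f(\R)\subseteq J$ because $J$ is an ideal is a point the paper leaves implicit, but it is the same argument.
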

\begin{proof}
	$ (1)\Rightarrow (2) $. Let $ (1) $ be true. If possible let there exist $ f\in J $ such that $ f\notin\liinf $. Since $ 0\in J\cap \liinf $, it follows that $ J\cap \liinf $ and $ J\setminus \liinf $ are two non-empty disjoint open sets in the space $ J $. This follows from the clopenness of $ \liinf $ in the space $ \mxa $. This contradicts the connectedness of $ J $.\\$ (2)\Rightarrow (3) $. Let $ (2) $ be true. If possible let there exist $ f\in J $ such that $ f\notin \imuxa $. Then there exists $ g\in \mxa $ such that $ fg\notin \liinf $. But since $ J $ is an ideal in $ \mxa $, $ f\in J $ implies that $ fg\in J $ and hence by $ (2) $, $ fg\in \liinf $, a contradiction.\\
	$ (3)\Rightarrow(1) $. Let $ (3) $ be true. Then for each $ f\in J $, it follows from Theorem \ref{t-2.4} that $ \phi_f $ is a continuous map. Therefore we get $ J=\bigcup\limits_{f\in J}\phi_f(\R) $, a connected subset of $ \mxa $ in the $ \mmui $-topology.
\end{proof}
\section{$ \umui $-topology on $ \mxa $}

\begin{theorem}\label{t-3.1}
	If $ f\in\liinf $, then the map $ \phi_f:\R\rightarrow \mxa $ given by $ \phi_f(r)=rf $ is continuous if $ \mxa $ is equipped with the $ \umui $-topology.
\end{theorem}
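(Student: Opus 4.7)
The plan is to argue directly from the definitions, essentially mimicking the proof of Theorem~\ref{t-2.4} but simplified because the essential $I$-boundedness of $f$ gives us a uniform scalar bound rather than requiring the auxiliary rescaling function $g=|u|+\chi_{(X\setminus Z)\cup A}$ used there. Fix $r\in\R$ and take an arbitrary basic $\umui$-neighbourhood $\umui(rf,\epsilon)$ of $\phi_f(r)=rf$. I want to produce $\delta>0$ with the property that $|s-r|<\delta$ forces $\phi_f(s)=sf\in\umui(rf,\epsilon)$.

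First, I will unpack the hypothesis $f\in\liinf$: this supplies a set $Z\in\zi$, a measurable subset $E\subseteq Z$ with $\mu(E)=0$, and a real $M>0$ such that $|f(x)|\leq M$ for every $x\in Z\setminus E$. Set $\delta=\dfrac{\epsilon}{M+1}$. Then for any $s\in(r-\delta,r+\delta)$ and every $x\in Z\setminus E$,
\[
|sf(x)-rf(x)|=|s-r|\,|f(x)|\leq |s-r|\,M<\epsilon.
\]
Consequently $\sup_{x\in Z\setminus E}|sf(x)-rf(x)|\leq |s-r|M<\epsilon$, where the same pair $(Z,E)$ witnesses membership in $\umui(rf,\epsilon)$. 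Hence $\phi_f(s)\in\umui(rf,\epsilon)$, as required, and continuity at the arbitrary point $r$ follows.

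There is no real obstacle here; the only subtlety worth flagging is that the very $Z\in\zi$ and null set $E$ that witness the essential $I$-boundedness of $f$ are what the $\umui$-neighbourhood definition requires to exist, so no further selection or intersection of zero sets is needed (in contrast to the $\mmui$ setting of Theorem~\ref{t-2.4}, where one had to intersect $Z$ with a second zero set $Z'$ arising from the auxiliary multiplicative unit). This explains why the continuity statement extends from $\imuxa$ in Theorem~\ref{t-2.4} to the larger set $\liinf$ here.
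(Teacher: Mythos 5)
Your proposal is correct and follows essentially the same route as the paper's proof: both use the essential $I$-upper bound $M$ of $f$ on some $Z\setminus E$ to choose a scalar window around $r$ (the paper takes $\lambda=\epsilon/(2\delta)$ where you take $\epsilon/(M+1)$, an immaterial difference) and then bound $\sup_{x\in Z\setminus E}|sf(x)-rf(x)|$ strictly below $\epsilon$ using that same pair $(Z,E)$. Nothing further is needed.
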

\begin{proof}
	Consider the basic open set $ \umui(rf,\epsilon) $, where $ r\in \R $ and $ \epsilon>0 $. Since $ f\in\liinf $, there exists $ \delta>0 $, $ Z\in \zi, E\in \cal{A}, E\subseteq Z $ such that $ \mu(E)=0 $ and for each $x\in Z\setminus E$, $ |f(x)|\leq\delta $. Let $ \lambda=\frac{\epsilon}{2\delta} $. Then for each $ t\in(r-\lambda, r+\lambda) $, $ |\phi_f(t)-\phi_f(r)|=|t-r||f|\leq
	\lambda |f|\leq \lambda\delta $ on $ Z\setminus E $. This implies that $ \sup\limits_{x\in Z\setminus E}|\phi_f(t)(x)-\phi_f(r)(x)|\leq\frac{\epsilon}{2}<\epsilon $. This means that $ \phi_f(t)\in \umui(\phi_f(r),\epsilon)$   where $ |t-r|<\lambda $. Thus the function $ \phi_f $ becomes continuous at the point $ r\in \R $.
\end{proof}
Since the $ \umui $-topology on $ \mxa $ is weaker than the $ \mmui $-topology, the next result is an improvement of Theorem \ref{t-2.2}.
\begin{theorem}\label{t-3.2}
	The set $ \liinf $ is a clopen subset of $ \mxa $ in the $ \umui $-topology.
\end{theorem}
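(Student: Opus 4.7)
The plan is to mimic the proof of Theorem \ref{t-2.2} essentially verbatim, the point being that the constants $\epsilon>0$ which parametrize the basic $\umui$-neighborhoods behave exactly like the positive $\mu_I$-unit functions that parametrize the $\mmui$-neighborhoods in so far as essential $I$-boundedness is concerned. So I would split into two claims: $\umui(f,1)\subseteq \liinf$ for every $f\in\liinf$ (which gives openness), and $\umui(g,1)\cap \liinf=\emptyset$ for every $g\in\mxa\setminus\liinf$ (which gives closedness).

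For openness, pick $f\in\liinf$, so there exist $Z_1\in\zi$ and $E_1\subseteq Z_1$ with $\mu(E_1)=0$ and $|f(x)|\leq \lambda$ on $Z_1\setminus E_1$ for some $\lambda>0$. Given $g\in \umui(f,1)$, there exist $Z_2\in\zi$ and $E_2\subseteq Z_2$ with $\mu(E_2)=0$ such that $\sup_{x\in Z_2\setminus E_2}|f(x)-g(x)|<1$. Then on $(Z_1\cap Z_2)\setminus (E_1\cup E_2)$ the triangle inequality yields $|g(x)|\leq |g(x)-f(x)|+|f(x)|<1+\lambda$, and since $\zi$ is closed under finite intersection (as $I$ is an ideal, $Z(h_1)\cap Z(h_2)=Z(h_1^2+h_2^2)$) and $\mu(E_1\cup E_2)=0$, this forces $g\in\liinf$.

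For closedness, take $g\in\mxa\setminus\liinf$ and suppose, toward a contradiction, that there is some $h\in \umui(g,1)\cap \liinf$. Then $h$ is essentially $I$-bounded by some $M>0$ on $Z_1\setminus E_1$ for suitable $Z_1\in\zi$, $E_1\subseteq Z_1$ with $\mu(E_1)=0$, while $\sup_{x\in Z_2\setminus E_2}|h(x)-g(x)|<1$ for some $Z_2\in\zi$ and $E_2\subseteq Z_2$ with $\mu(E_2)=0$. Then on $(Z_1\cap Z_2)\setminus (E_1\cup E_2)$ we get $|g(x)|<M+1$, contradicting $g\notin \liinf$. Hence $\umui(g,1)\cap\liinf=\emptyset$ and $\liinf$ is closed.

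No real obstacle is foreseen; the only subtlety worth pointing out is the closure of $\zi$ under finite intersection, which was also used tacitly in Theorem \ref{t-2.2}. The whole content of Theorem \ref{t-3.2} is that the openness/closedness argument of Theorem \ref{t-2.2} goes through with the constant $1$ playing the role of the positive $\mu_I$-unit, and this is precisely why the statement constitutes an \emph{improvement} of Theorem \ref{t-2.2} despite $\umui$ being the coarser topology.
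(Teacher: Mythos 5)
Your proposal is correct and follows essentially the same route as the paper: openness via the triangle inequality on $(Z_1\cap Z_2)\setminus(E_1\cup E_2)$, and closedness via the same estimate (the paper phrases it as ``every closure point of $\liinf$ lies in $\liinf$'' using a function in $\umui(f,\frac{1}{2})\cap\liinf$, whereas you show the complement is open, but the computation is identical). The choice of radius ($1$ versus $\frac{1}{3}$ or $\frac{1}{2}$) is immaterial.
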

\begin{proof}
	Let $ f\in\liinf $. Then since $ \zi $ is closed under finite intersection and the union of two sets of $ \mu $-measure zero is again a set of $ \mu $-measure zero, it follows by straight forward arguments that $ \umui(f,\frac{1}{3})\subseteq\liinf $ from which we can say that $ f $ is an interior point of $ \liinf $ in the $ \umui $-topology. Thus $ \liinf $ is open in this topology. To show that $ \liinf $ is a closed subset of $ \mxa $ in the same topology, let $ f\in$ cl$_{\mxa}\liinf $. Then for each $ \epsilon>0 $ in $ \R $, $ \umui(f,\epsilon)\cap\liinf\neq \emptyset $. Select a function $ g\in\umui(f,\frac{1}{2})\cap \liinf $. Then there exists $ Z\in \zi, E\subseteq Z $ with $ \mu(E)=0 $ and $ \delta>0 $ such that for each $x\in Z\setminus E$, $ |g(x)|\leq \delta $ and $ |g(x)-f(x)|\leq\frac{1}{2}$. This implies that for each $ x\in Z\setminus E $, $ |f(x)|\leq\delta+\frac{1}{2} $, from which it follows that $ f\in\liinf $. Thus $ \liinf $ is closed in $ \mxa $ in the $ \umui $-topology.
\end{proof}
\begin{theorem}\label{t-3.3}
	$ \liinf $ is the component of $ 0 $ in $ \mxa $ in the $ \umui $-topology.
\end{theorem}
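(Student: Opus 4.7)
The plan is to combine the two immediately preceding results: the continuity of the scaling maps $\Phi_f$ for $f \in \liinf$ (Theorem \ref{t-3.1}) and the clopenness of $\liinf$ in the $\umui$-topology (Theorem \ref{t-3.2}).

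First I would show that $\liinf$ is connected in the $\umui$-topology. For each $f \in \liinf$, Theorem \ref{t-3.1} guarantees that $\Phi_f(\R) = \{rf : r \in \R\}$ is a connected subset of $\mxa$, since it is the continuous image of the connected space $\R$. Moreover, each such set contains the zero function, because $\Phi_f(0) = 0$. Hence the union
\[
\liinf = \bigcup_{f \in \liinf} \Phi_f(\R)
\]
is connected, as it is a union of connected sets sharing the common point $0$. Note the inclusion $\supseteq$ is trivial (each $\Phi_f(\R) \subseteq \liinf$ since $\liinf$ is a linear subspace), while $\subseteq$ holds because $f = \Phi_f(1)$.

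Next I would argue that $\liinf$ is a maximal connected subset containing $0$, i.e., the component of $0$. Suppose $C$ is any connected subset of $\mxa$ in the $\umui$-topology containing $0$. By Theorem \ref{t-3.2}, $\liinf$ is clopen in $\mxa$, so $C \cap \liinf$ and $C \setminus \liinf$ form a disjoint pair of relatively open subsets of $C$. Since $0 \in C \cap \liinf$, connectedness of $C$ forces $C \setminus \liinf = \emptyset$, i.e., $C \subseteq \liinf$. Combined with the connectedness of $\liinf$ established above, this shows that $\liinf$ is precisely the component of $0$.

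There is essentially no obstacle here: both tools needed are already in place, so the proof is a clean two-step assembly. The only point that deserves a sentence of care is verifying that $\bigcup_{f \in \liinf} \Phi_f(\R)$ really equals $\liinf$ (as opposed to some larger set), but this is immediate from $\liinf$ being closed under scalar multiplication together with $f = 1 \cdot f \in \Phi_f(\R)$.
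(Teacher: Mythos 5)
Your proof is correct and follows essentially the same route as the paper's: connectedness of $\liinf$ via the union $\bigcup_{f\in\liinf}\Phi_f(\R)$ of connected sets through $0$ (using Theorem \ref{t-3.1}), and maximality via the clopenness of $\liinf$ from Theorem \ref{t-3.2}. Your version is, if anything, slightly more explicit about why clopenness forces any connected set containing $0$ to lie inside $\liinf$.
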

\begin{proof}
	It follows from Theorem \ref{t-3.1} that for each $ f\in\liinf $, $ \phi_f(\R) $ is a connected subset of $ \mxa $ in the $ \umui $-topology. As $ \liinf $ is a vector space over $ \R $, we can write $ \liinf = \displaystyle  \bigcup\limits_{f\in\liinf}\phi_f(\R)= $ the union of connected sets each of which contains the function $ 0 $. Thus $ \liinf $ is a connected subset of $ \mxa $ in the $\umui $-topology. It follws from Theorem \ref{t-3.2} therefore that $ \liinf $ is a maximal connected subset of $ \mxa $ containing $ 0 $ in the $ \umui $-topology. In other wards $ \liinf $ is the component of $ 0 $ in $ \mxa $ in this topology.
\end{proof}
It is a standard result in general topology that the component of a point in a topological space is contained in a quasicomponent of the same point. But we have the following result in our situation.
\begin{theorem}
	The component and quasicomponent of $ 0 $ in $ \mxa $ in the $ \umui $-topology are identical.
\end{theorem}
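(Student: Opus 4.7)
The plan is to use the standard containment that the component of a point is always contained in its quasicomponent, and then close the reverse inclusion by exhibiting one particular clopen set that pins the quasicomponent down.

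First, recall that by Theorem \ref{t-3.3} the component of $0$ in $\mxa$ (in the $\umui$-topology) is exactly $\liinf$. Quite generally, in any topological space, the component of a point is contained in the quasicomponent of the same point, so I immediately get
\[
\liinf \subseteq \mathrm{Qc}(0),
\]
where $\mathrm{Qc}(0)$ denotes the quasicomponent of $0$ in the $\umui$-topology.

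For the reverse inclusion, I would invoke Theorem \ref{t-3.2}, which tells us that $\liinf$ is itself a clopen subset of $\mxa$ in the $\umui$-topology. Since $\liinf$ contains $0$, it is one of the clopen sets over which the quasicomponent of $0$ is defined as an intersection; hence
\[
\mathrm{Qc}(0) \subseteq \liinf.
\]
Combining the two inclusions gives $\mathrm{Qc}(0) = \liinf$, which equals the component of $0$ by Theorem \ref{t-3.3}.

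There is no real obstacle here: the entire argument is a one-line consequence of having already established (i) the identification of the component of $0$ with $\liinf$ in Theorem \ref{t-3.3}, and (ii) the clopenness of $\liinf$ in Theorem \ref{t-3.2}. The only thing to be careful about is that Theorem \ref{t-3.2} is phrased in the $\umui$-topology (not merely the finer $\mmui$-topology), so that it is legitimate to use $\liinf$ as a clopen separator in the definition of the quasicomponent with respect to the $\umui$-topology.
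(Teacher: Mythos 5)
Your proposal is correct and follows exactly the same route as the paper: use the standard inclusion of the component in the quasicomponent, then use Theorem \ref{t-3.2} (clopenness of $\liinf$ in the $\umui$-topology) together with Theorem \ref{t-3.3} to force the reverse inclusion. Nothing is missing.
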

\begin{proof}
	It is sufficient to show in view of the preceeding observation that the quasicomponent of $ 0 $ in $ \mxa$ is contained in the component of $ 0 $. This is immediate because $ \liinf $ is a clopen subset of $ \mxa $ in the $ \umui $-topology containing $ 0 $ and the quasicomponent of $ 0 $ is the intersection of all clopen sets which contain $ 0 $.
\end{proof}
It is known that the component of $ 0 $ in a topological ring is an ideal in this ring (Theorem 2.6, \cite{ref6}). The following counterexample shows that in a ring with a topology  which is compatible only with the underlying additive group structure, the component of $ 0 $ may not be an ideal.
\begin{example}\label{e-3.4}
Let $ X=[0,1] $, $ \mathcal{A}= $ the $ \sigma $-algebra of all Lebesgue measurable subsets of $ X $, $ \mu $ is the Lebesgue measure. Thus $ \mxa $ is the ring of all real-valued Lebesgue measurable functions over $ [0,1] $. Suppose $ I=\{f\in \mxa: f[0,\frac{1}{3}]=\{0\}\} $. Then $ I $ is an ideal in $ \mxa $. We assert that $ \liinf\subsetneq \mxa $ and hence $ \liinf $ is not an ideal in $ \mxa $ because the function $ 1\in\liinf $.
\end{example}
\begin{proof}[Proof of the assertion:] Define the function $ f:[0,1]\rightarrow\R $ as follows: corresponding to a strictly increasing sequence $ t_1<t_2<\ldots<t_n<\ldots $ in the interval $ (0,\frac{1}{3}) $ with $\lim\limits_{n\rightarrow \infty}t_n=\frac{1}{3}  $, set
$ f([0,t_1))=\{1\} $, $ f([t_{n-1},t_n))=\{n\} $ for $ n\geq2, n\in\mathbb{N} $ and $ f=0 $ on $ [\frac{1}{3},1] $. We see that $ f\in\mxa $ (by the well-known pasting lemma for measurable functions) but $ f\notin\liinf $. Thus $  f\in\mxa\setminus \liinf $.
\end{proof}

We recall $ \umuip=\{f\in\mxa:$ there exists $Z\in\zi$ and $E\subseteq Z$ with $\mu(E)=0$ such that for each $x\in Z\setminus E,\ f(x)>0 \} $. We now set $ \umuipp=\{f\in\mxa:$ there exists $ Z\in\zi, E\subseteq Z$ with $\mu(E)=0$ and $\lambda>0$ in $ \R $ such that for each $ x\in Z\setminus E,\ f(x)\geq\lambda \} $. It is clear that $ \umuipp\subseteq\umuip $.
The following theorem decides when $ \umuip $ and $ \umuipp $ are equal. 
\begin{theorem}\label{t-3.5}
	For an ideal $ I $ in $ \mxa $, $ \umuip=\umuipp $ if and only if the $ \mmui $-topology on $ \mxa $ coincides with the $ \umui $-topology.
\end{theorem}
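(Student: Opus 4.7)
The plan is to prove both implications by translating between the two kinds of basic neighborhoods around $0$, using the constant functions as test functions in one direction and using a uniform positive lower bound in the other.

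For the ``if'' direction, assume the two topologies coincide, and take an arbitrary $u\in\ui$. Since the inclusion $\umuipp\subseteq\umuip$ is automatic, I only need to show $u\in\umuipp$. Consider the $\mmui$-basic open neighborhood $\mmui(0,u)$ of the zero function; by hypothesis it is also open in the $\umui$-topology, so there exists $\epsilon>0$ with $\umui(0,\epsilon)\subseteq\mmui(0,u)$. Now the constant function $\underline{\epsilon/2}$ on $X$ obviously satisfies $\sup_{x\in Z\setminus E}|\epsilon/2|=\epsilon/2<\epsilon$ for any $Z\in\zi$ and any $E\subseteq Z$ of $\mu$-measure $0$, so $\underline{\epsilon/2}\in\umui(0,\epsilon)$. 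Hence $\underline{\epsilon/2}\in\mmui(0,u)$, which unpacks to: there exist $Z\in\zi$ and a measurable $E\subseteq Z$ with $\mu(E)=0$ such that $\epsilon/2<u(x)$ for every $x\in Z\setminus E$. Taking $\lambda=\epsilon/2$, this exhibits $u$ as a member of $\umuipp$.

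For the ``only if'' direction, assume $\ui=\umuipp$. Since the $\umui$-topology is already known to be coarser than the $\mmui$-topology, I only need to show the reverse: given any $\mmui$-basic open set $\mmui(f,u)$ around an arbitrary $f\in\mxa$, I need to produce $\epsilon>0$ with $\umui(f,\epsilon)\subseteq\mmui(f,u)$. Because $u\in\ui=\umuipp$, I obtain $Z\in\zi$, $E\subseteq Z$ with $\mu(E)=0$, and $\lambda>0$ so that $u(x)\geq\lambda$ on $Z\setminus E$. Now pick any $g\in\umui(f,\lambda)$: by definition there exist $Z'\in\zi$ and $E'\subseteq Z'$ with $\mu(E')=0$ and $\sup_{x\in Z'\setminus E'}|f(x)-g(x)|<\lambda$. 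On the set $(Z\cap Z')\setminus(E\cup E')$, which lies in $Z\cap Z'\in\zi$ with the excluded part $E\cup E'$ of measure zero, we obtain $|f(x)-g(x)|<\lambda\leq u(x)$. Thus $g\in\mmui(f,u)$, and $\umui(f,\lambda)\subseteq\mmui(f,u)$, as required.

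I do not expect a real obstacle in either direction; the proof reduces to a careful bookkeeping of the quantifiers in the definitions of $\mmui(f,u)$, $\umui(f,\epsilon)$, $\ui$ and $\umuipp$, together with the observation that $\zi$ is closed under finite intersection and that a finite union of $\mu$-null sets is still $\mu$-null. The only point that requires a small trick is the use of the constant function $\underline{\epsilon/2}$ as a test element in the ``if'' direction, which converts a topological containment $\umui(0,\epsilon)\subseteq\mmui(0,u)$ into the desired uniform positive lower bound on $u$.
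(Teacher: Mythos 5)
Your proof is correct and follows essentially the same route as the paper's: in one direction you test the containment $\umui(0,\epsilon)\subseteq\mmui(0,u)$ against a small positive constant function to extract a uniform lower bound for $u$ (the paper does exactly this, phrased contrapositively with the constant $\epsilon/3$), and in the other direction you use the lower bound $\lambda$ from $u\in\umuipp$ to get $\umui(f,\lambda)\subseteq\mmui(f,u)$, just as the paper does with its constant $M$. No substantive difference.
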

\begin{proof}
	First assume that $ \mmui $-topology = $ \umui $-topology. If possible let $ \umuipp \subsetneq\umuip$. So we can select a $ \omega $ from $ \umuip $ such that $ \omega\notin\umuipp $. Now for any $ \epsilon>0 $, there does not exist any $ Z\in\zi $ and an $ E\subseteq Z $ with $ \mu(E)=0 $ for which we can write $ |\omega(x)|\geq\frac{\epsilon}{3} $ for all $ x\in Z\setminus E $. This implies that for any $ f\in\mxa $, $ f+\frac{\epsilon}{3}\notin\mmui(f,\epsilon) $. On the other hand $ f+\frac{\epsilon}{3}\in \umui(f,\epsilon) $. Thus $ \umui(f,\epsilon)\nsubseteq \mmui(f,\epsilon) $. This shows that the set $ \mmui(f,\omega) $, which is open in the $ \mmui $-topology is not open in the $ \umui $-topology, a contradiction. Therefore $ \umuip=\umuipp $.
	
	To prove the converse, let $ \umuipp=\umuip $. Then for $ v\in \umuip $, there exists $ M>0,\ Z\in\zi $ and $ E\subseteq Z $ with $ \mu(E)=0 $ such that for each $ x\in Z\setminus E $, $ v(x)>M $. This implies that for any $ f\in\mxa $, $ f\in\umui(f,M)\subseteq \mmui(f,v) $. Thus each open set in the $ \mmui $-topology is open in the $ \umui $-topology and hence these two topologies on $ \mxa $ coincide.
\end{proof}
The follwing comprehensive theorem offers several conditions each necessary and sufficient for the coincidence of the $ \mmui $-topology and the $ \umui $-topology on $ \mxa $.
\begin{theorem}\label{t-3.6}
	The following statements are equivalent.
	\begin{enumerate}
		\item The $ \umui $-topology = the $ \mmui $-topology on $ \mxa $.
		\item $ \mxa $ with the $ \umui $-topology is a topological ring.
		\item $ \mxa $ with the $ \umui $-topology is a topological vector space.
		\item $ \liinf=\mxa $.
		\item $ \imuxa=\liinf $.
		\item $ \imuxa=\mxa $.
		\item $ \mxa $ with the $ \umui $-topology is connected.
		\item $ \zi $ is a $ \mu $-stable family.

	\end{enumerate}
\end{theorem}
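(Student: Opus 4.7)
The plan is to assemble the eight equivalences by stitching together the already-established Theorem \ref{t-2.7}, Theorem \ref{t-3.3}, and Theorem \ref{t-3.5} with a handful of direct new implications, rather than proving a single long cycle. First, Theorem \ref{t-2.7} immediately delivers $(4)\Leftrightarrow(6)\Leftrightarrow(8)$, since these are exactly items (1), (2), and (6) of that theorem. Next, $(4)\Leftrightarrow(7)$ is read off from Theorem \ref{t-3.3}: $\mxa$ is connected in the $\umui$-topology iff the component of $0$ (which is $\liinf$) coincides with $\mxa$.

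For $(4)\Leftrightarrow(5)$, the forward direction combines $(4)$ with Theorem \ref{t-2.7} to give $\imuxa=\mxa=\liinf$. For the reverse, I would observe that the constant function $1$ is trivially in $\liinf$, so $(5)$ forces $1\in\imuxa$; absorption then yields $1\cdot g=g\in\liinf$ for every $g\in\mxa$, giving $(4)$. The equivalence $(1)\Leftrightarrow(4)$ is handled via Theorem \ref{t-3.5}: assuming $(4)$, any $u\in\umuip$ that is positive on $Z\setminus A$ extends to a positive multiplicative unit $v\in\mxa$ (set $v=u$ on $Z\setminus A$ and $v=1$ elsewhere); then $1/v\in\mxa=\liinf$ is bounded by some $M$ on some $Z'\setminus E$, so $u=v\geq 1/M$ on $(Z\cap Z')\setminus(A\cup E)$, proving $u\in\umuipp$ and hence the hypothesis of Theorem \ref{t-3.5}. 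Conversely, if $\liinf\subsetneq\mxa$ I would pick $f\geq 1$ with $f\notin\liinf$, put $g=1/f$, and argue exactly as in the $(*)$-type relation of Theorem \ref{t-2.7} that $g\in\umuip\setminus\umuipp$, violating Theorem \ref{t-3.5}.

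The implication $(1)\Rightarrow(2)$ is immediate because the $\mmui$-topology is already known to make $\mxa$ a topological ring; for the converse $(2)\Rightarrow(4)$ I would invoke Theorem 2.6 of \cite{ref6}: the component of $0$ in any topological ring is an ideal. By Theorem \ref{t-3.3} this component equals $\liinf$, so $\liinf$ becomes an ideal in $\mxa$; since $1\in\liinf$, ideal absorption gives $\liinf=\mxa$. For $(4)\Rightarrow(3)$, under $(4)$ the topology on $\liinf=\mxa$ coincides with the pseudonorm topology of $\|\cdot\|_I^{\infty}$ (by the equivalence $(1)\Leftrightarrow(3)$ of Theorem \ref{t-2.7}, together with $(1)\Leftrightarrow(4)$ established above), and a pseudonorm topology is automatically a topological vector space topology.

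The delicate step is $(3)\Rightarrow(4)$: I would adapt the $(7)\Rightarrow(4)$ portion of Theorem \ref{t-2.7} to the coarser $\umui$-topology. Taking $f\geq 1$ with $f\notin\liinf$ and $g=1/f$ produces the same measure-theoretic relation $(*)$. One then supposes scalar multiplication $\psi:\R\times\mxa\to\mxa$ is continuous at $(r,\underline{s})$ with $r,s\neq 0$ and tests it on the $\umui$-basic neighbourhood $\umui(\underline{rs},\epsilon)$ for small $\epsilon$; the condition $\underline{ts}\in\umui(\underline{rs},\epsilon)$ for $t$ close to $r$ eventually forces $g\geq |(t-r)s|$ uniformly on some $Z\setminus E$, contradicting $(*)$. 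The main obstacle will be this last step, because the $\umui$ basic neighbourhoods are parametrised only by a scalar $\epsilon$ rather than by a positive $\mu_I$-unit, so some care is needed to extract the contradiction without the extra flexibility of a variable bound $u$, but $(*)$ is precisely the tool that does it.
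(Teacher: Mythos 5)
Most of your decomposition is sound and tracks the paper's own proof closely: $(4)\Leftrightarrow(6)\Leftrightarrow(8)$ from Theorem \ref{t-2.7}, $(4)\Leftrightarrow(7)$ from Theorems \ref{t-3.2}--\ref{t-3.3}, $(2)\Rightarrow(4)$ via the component-of-$0$-is-an-ideal fact, and your $(4)\Rightarrow(1)$ via Theorem \ref{t-3.5} (your extension of $u$ to a positive unit $v$ and the bound on $1/v$ is a clean variant of the paper's $u^{*2}+f^2$ construction, and it works). The one step that fails as described is $(3)\Rightarrow(4)$. You propose to test continuity of scalar multiplication at $(r,\underline{s})$ with $r,s\neq 0$ against the neighbourhood $\umui(\underline{rs},\epsilon)$ and claim the membership $\underline{ts}\in\umui(\underline{rs},\epsilon)$ ``eventually forces $g\geq|(t-r)s|$ uniformly on some $Z\setminus E$.'' It does not: $\underline{ts}\in\umui(\underline{rs},\epsilon)$ means only $|ts-rs|<\epsilon$ as a real number, and the function $g=1/f$ never enters the condition. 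Worse, scalar multiplication \emph{is} continuous at every point $(r,\underline{s})$ in the $\umui$-topology regardless of whether $\liinf=\mxa$, since for $\alpha$ near $r$ and $h\in\umui(\underline{s},\epsilon')$ one has $|\alpha h-rs|\leq|\alpha||h-\underline{s}|+|s||\alpha-r|<(|r|+\delta)\epsilon'+|s|\delta$ on the relevant $Z\setminus E$. The argument you are transplanting from the $(7)\Rightarrow(4)$ step of Theorem \ref{t-2.7} depended essentially on the target neighbourhood being $\mmui(\underline{rs},g)$, i.e.\ parametrised by the small positive function $g$; in the $\umui$-topology no such neighbourhood exists, which is exactly the obstacle you flagged but did not overcome.

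The repair is to change the test point: take $f\geq 1$ with $f\notin\liinf$ and test continuity of $\psi$ at $(0,f)$, which is what the paper does. If $\psi$ were continuous there, then since $\psi(0,f)=0$ and $\umui(0,1)$ is a neighbourhood of $0$, there would exist $\delta>0$ and $\epsilon>0$ with $(-\delta,\delta)\cdot\umui(f,\epsilon)\subseteq\umui(0,1)$; in particular $\tfrac{\delta}{2}f\in\umui(0,1)$, so $|f|<2/\delta$ on $Z\setminus E$ for some $Z\in\zi$ and null $E$, i.e.\ $f\in\liinf$, a contradiction. With this substitution your proof is complete and is essentially the paper's, modulo a hub-and-spoke organisation around statement $(4)$ in place of the paper's chain of implications.
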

\begin{proof}
$ (1)\Rightarrow (2) $ is immediate because $ \mxa $ with the $ \mmui $-topology is a topological ring. $ (2)\Rightarrow (3) $ is also immediate because each (real-valued) constant function on $ (X,\cal{A}) $ is measurable.\\
$ (2)\Rightarrow (4):$ Let $ (2)$ be true. Then since the component of $ 0 $ in a topological ring is an ideal in this ring (Theorem 2.6, \cite{ref6}), it follows from Theorem \ref{t-3.3}, that $ \liinf $ is an ideal in $ \mxa $. Since $ 1\in\liinf $, this implies that $ \liinf=\mxa $.\\
$ (4)\Rightarrow (5):$ Let $ (4) $ be true. Choose $ f\in\liinf=\mxa $. Then for each $ g\in \mxa $, $ g\in\liinf $ implies that $ fg\in\liinf $. Thus $ f\in\imuxa $. Hence $ \liinf \subseteq\imuxa\subseteq \liinf$. Therefore $ \liinf=\imuxa $.\\
$ (5)\Rightarrow (6) $ is immediate because $ \imuxa $ is an ideal in the ring $ \mxa $ and $ 1\in\liinf $.\\
$ (6)\Rightarrow (7) $ follows from Theorem \ref{t-2.5}, in conjunction with the fact that $ \mmui $-topology on $ \mxa $ is finer than the $ \umui $-topology.\\
$ (5)\Leftrightarrow (8) $ is the same as $ (2)\Leftrightarrow (6) $ in Theorem \ref{t-2.7}.\\
$ (7)\Rightarrow (4) $ follows directly from Theorem \ref{t-3.2} as the set $ \liinf $ is non-void.\\
$ (4)\Rightarrow (1) $. Let $ (4) $ be true. Then $ \mxa=\bigcup\limits_{\epsilon>0}\umui(0,\epsilon) $. Let $ u\in\umuip $. To prove the validity of $(1)$, it suffices to show in view of Theorem \ref{t-3.5} that $ u\in\umuipp $. Now $ u\in\umuip $ implies that $ u(x)>0 $ for each $ x\in Z(f)\setminus E $ for a suitable $ f\in I $ and an appropriate $ E\subseteq Z(f) $ with $ \mu(E)=0 $. Consequently then $ Z(u)\cap(Z(f)\setminus E)=\emptyset $. Define the function $ u^*:X\rightarrow \R $ as follows: 
 $$ u^*(x)=\begin{cases}
 	 u(x) & if\  x\in Z(u)\setminus E\\
 1 &  otherwise. 
 \end{cases}
$$
Then $ u^*\in\mxa $ and $ Z(u^*)\cap Z(f)=\emptyset $. This implies that $ Z(u^{*^2}+f^2)=\emptyset $. Thus $ \frac{1}{u^{*^2}+f^2} \in \mxa=\liinf$. Hence there exists $ \epsilon>0 $ such that $ \frac{1}{u^{*^2}+f^2}\in \umui(0,\epsilon) $. Therefore there exists $ Z\in\zi $ and $ E^*\subseteq Z $ with $ \mu(E^*)=0 $ such that $ \sup\limits_{x\in Z\setminus E^*} \frac{1}{(u^{*^2}+f^2)(x)}<\epsilon$. This implies that for each $ x\in Z\setminus E^* $, $ (u^{*^2}+f^2)(x)>\frac{1}{\epsilon} $ and hence for each $ x\in (Z\setminus E^*)\cap Z(f) $, $ u(x)>\frac{1}{\sqrt{\epsilon}} $. This implies that $ u\in\umuipp $. Thus $ \umuip=\umuipp $. Thus far we have established that the seven statements $ (1),(2), (4), (5), (6), (7), (8) $ are equivalent. Since $ (2)\Rightarrow (3) $ is already established, to complete this theorem it suffices to prove that $ (3)\Rightarrow (4) $. So assume that $ (4) $ is false. Then there exists an $ f\in \mxa $, $ f\geq 1 $ on $ X $ with the following property: for each $ Z\in\zi $ and each $ E\subseteq Z $ with $ \mu(E)=0 $ and for each $ n\in\mathbb{N},\ \mu\{x\in Z \setminus E: f(x)>n\}>0 $. We assert that the scalar multiplication: \begin{alignat*}{2}
	\R\times\mxa&\rightarrow \mxa\\
	(r,g)&\mapsto r.g
\end{alignat*}
is not continuous at the point $ (0,f) $ in the $ \umui$-topology and therefore $ (3) $ is false.\\

\end{proof}
\section{Question of completeness and separability of the pseudonormed linear space $ \liinf$}

At the very beginning of this section, we make two additional assumptions: \begin{enumerate}
	\item For the ideal $ I $ in $ \mxa $, the associated filter $ \zi=\{Z(f):f\in I\} $ of measurable sets is closed under countable intersection. We note that this condition is fulfilled if $ I $ is a real maximal ideal in $ \mxa $ and is trivially fulfilled with $ I=\{0\} $.
	\item For each $ E\in \zi, \mu(E)>0 $. Observe that this condition is also obeyed by $ I=\{0\} $ if $ \mu(X)>0 $.
\end{enumerate}
The following lemma will be helpful towards the completeness of $ \liinf $.
\begin{lemma}\label{l-4.1}
	For each $ f\in\liinf $, $ ||f||_I^{\infty} $ is itself an essential $ I $-upper bound of $ f $ over $ X $ $ ( $thus the set of all essential $ I $-upper bounds of $ f $ over $ X $ has a smallest member$ ) $.
\end{lemma}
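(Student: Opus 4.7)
The plan is to produce a single pair $(Z,E)$ with $Z\in Z[I]$, $E\subseteq Z$ measurable, and $\mu(E)=0$, such that $|f(x)|\le \|f\|_I^\infty$ on $Z\setminus E$. The standard device is to approximate $\|f\|_I^\infty$ from above by a countable sequence of essential $I$-upper bounds and then glue the corresponding witnesses using the assumption that $Z[I]$ is closed under countable intersection.

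First I would choose, for each $n\in\mathbb{N}$, an element $\lambda_n\in A_f$ with $\lambda_n<\|f\|_I^\infty+1/n$; this is possible because $\|f\|_I^\infty=\inf A_f$. By the definition of $A_f$, for each $n$ there exist $Z_n\in Z[I]$ and a measurable set $E_n\subseteq Z_n$ with $\mu(E_n)=0$ such that $|f(x)|\le\lambda_n$ for every $x\in Z_n\setminus E_n$. Next, I would set
\[
Z=\bigcap_{n=1}^\infty Z_n,\qquad E=Z\cap\bigcup_{n=1}^\infty E_n.
\]
By assumption (1) at the start of the section, $Z[I]$ is closed under countable intersection, so $Z\in Z[I]$. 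The set $E$ is measurable, is contained in $Z$ by construction, and satisfies $\mu(E)\le\sum_n\mu(E_n)=0$, so $\mu(E)=0$.

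The final step is to verify the inequality on $Z\setminus E$. If $x\in Z\setminus E$, then $x\in Z\subseteq Z_n$ for every $n$, and since $x\notin E$ and $x\in Z$ we have $x\notin E_n$ for every $n$; hence $x\in Z_n\setminus E_n$, which forces $|f(x)|\le\lambda_n$. Letting $n\to\infty$ gives $|f(x)|\le\|f\|_I^\infty$, so $\|f\|_I^\infty\in A_f$ and is therefore the smallest member of $A_f$.

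The main technical point—indeed the only place where any subtlety enters—is producing the witness $(Z,E)$ for $\|f\|_I^\infty$ itself from the countably many witnesses $(Z_n,E_n)$. The intersection $\bigcap_n Z_n$ is handled by the countable intersection hypothesis on $Z[I]$, and the two potential pitfalls (keeping $E$ inside $Z$, and ensuring $\mu(E)=0$) are resolved simultaneously by setting $E=Z\cap\bigcup_n E_n$ rather than the more naive $\bigcup_n E_n$. No further computation is required.
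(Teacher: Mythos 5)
Your proposal is correct and follows essentially the same route as the paper: approximate $\|f\|_I^\infty$ from above by countably many essential $I$-upper bounds, intersect the sets $Z_n$ using the countable-intersection hypothesis on $Z[I]$, and absorb the exceptional points into a single null set (the paper phrases this as showing $\mu\{x\in Z:|f(x)|>\|f\|_I^\infty\}=0$ via a countable union of null sets, which is the same computation as your $E=Z\cap\bigcup_n E_n$). No substantive difference.
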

\begin{proof}
	By a known property of infimum, there exists a decreasing sequence $ t_1>t_2>\ldots $ of positive real numbers with the following property: each $ t_n $ is an essential $ I $-upper bound of $ f $ and $\lim\limits_{n\rightarrow \infty}t_n=||f||_I^{\infty} $. Consequently for each $ n\in \mathbb{N} $, there exists $ Z_n\in\zi $ and a measurable subset $ E_n $ of $ Z_n $ such that $ \mu(E_n)=0 $ and for each $ x\in Z_n\setminus E_n $, $ |f(x)|\leq t_n $. Take $ Z=\displaystyle \bigcap_{n=1}^{\infty} Z_n$. Then by the additional hypothesis $ Z\in\zi $ and $ \{x\in Z:|f(x)|>||f||_I^{\infty}\} \subseteq \displaystyle \bigcup_{n=1}^{\infty}\{x\in Z_n:|f(x)|>t_n\}$. This implies that \[ \mu\{x\in Z:|f(x)|>||f||_I^{\infty}\}\leq \sum_{n=1}^{\infty} \mu\{x\in Z_n:|f(x)|>||f||_I^{\infty}\} \  \ldots (*)\] We recall that $ |f(x)|\leq t_n $ for each $ x\in Z_n\setminus E_n $ and $ \mu(E_n)=0 $. This implies that $ \mu\{x\in Z_n:|f(x)|>t_n\}=0 $ for each $ n\in \mathbb{N} $. It follows from the above relation $ (*) $ that $ \mu\{x\in Z:|f(x)|>||f||_I^{\infty}\}=0 $. Hence $ ||f||_I^{\infty} $ is an essential $ I $-upper bound of $ f $ over $ X $.
\end{proof}
\begin{theorem}\label{t-4.2}
	The pseudonormed linear space $ (\liinf, ||.||_I^{\infty}) $ is complete.
\end{theorem}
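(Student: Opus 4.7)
The plan is to take an arbitrary Cauchy sequence $\{f_n\}$ in $(\liinf, ||.||_I^{\infty})$, exhibit a pointwise limit on a single ``good'' set from $\zi$ (modulo a null set), extend this limit measurably to all of $X$, and then verify membership in $\liinf$ together with pseudonorm convergence. Lemma \ref{l-4.1} is the crucial tool that lets us replace the infimum defining $||.||_I^{\infty}$ by an actual almost-everywhere inequality, and the standing assumption that $\zi$ is closed under countable intersection is what allows us to merge countably many of these witnessing sets into one.

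First I would fix witnesses. By Lemma \ref{l-4.1}, for every pair $(n,m)$ there exist $Z_{n,m}\in\zi$ and a measurable $E_{n,m}\subseteq Z_{n,m}$ with $\mu(E_{n,m})=0$ such that $|f_n(x)-f_m(x)|\leq ||f_n-f_m||_I^{\infty}$ for all $x\in Z_{n,m}\setminus E_{n,m}$. Similarly, for each $n$ there exist $Z_n\in\zi$ and a null set $E_n\subseteq Z_n$ with $|f_n(x)|\leq ||f_n||_I^{\infty}$ on $Z_n\setminus E_n$. Using the hypothesis that $\zi$ is closed under countable intersection, set $Z=\bigcap_{n}Z_n \cap \bigcap_{n,m}Z_{n,m}\in\zi$ and $E=\bigcup_n E_n\cup\bigcup_{n,m}E_{n,m}$, which is a null measurable subset of $Z$. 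On the set $Y:=Z\setminus E$ we then have, uniformly in $x$, the estimate $|f_n(x)-f_m(x)|\leq ||f_n-f_m||_I^{\infty}$ and also $|f_n(x)|\leq ||f_n||_I^{\infty}$ for every $n,m$.

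Next I would construct the limit. Since $\{f_n\}$ is Cauchy in the pseudonorm, the sequence $\{f_n|_Y\}$ is uniformly Cauchy on $Y$, hence converges uniformly to a bounded function $f^*\colon Y\to\R$. To get a measurable function on all of $X$, put $g_n=f_n\cdot \chi_{Y}$; each $g_n\in\mxa$, and the pointwise limit $f(x):=\lim_n g_n(x)$ exists for every $x\in X$ (it equals $f^*(x)$ on $Y$ and $0$ off $Y$), so $f\in\mxa$ as a pointwise limit of measurable functions. Because $\{||f_n||_I^{\infty}\}$ is bounded (any Cauchy sequence of reals is bounded) and $|f(x)|=\lim|f_n(x)|\leq \sup_n ||f_n||_I^{\infty}$ on $Y$, the function $f$ is essentially $I$-bounded, so $f\in\liinf$.

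Finally I would verify convergence in pseudonorm. Given $\epsilon>0$, pick $N$ with $||f_n-f_m||_I^{\infty}<\epsilon$ for all $n,m\geq N$. Fixing $n\geq N$ and letting $m\to\infty$ in the inequality $|f_n(x)-f_m(x)|\leq \epsilon$ valid for $x\in Y$ yields $|f_n(x)-f(x)|\leq\epsilon$ on $Y$, so $\epsilon$ is an essential $I$-upper bound of $f_n-f$ and therefore $||f_n-f||_I^{\infty}\leq\epsilon$. Hence $f_n\to f$ in the pseudonorm, proving completeness. The main technical obstacle is the bookkeeping of the doubly-indexed family of witnessing sets and null sets; this is precisely where the countable intersection assumption on $\zi$ (and the fact that a countable union of $\mu$-null measurable sets is $\mu$-null) is essential, and where the argument would fail for a general ideal.
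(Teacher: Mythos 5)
Your proof is correct and follows essentially the same route as the paper's: invoke Lemma \ref{l-4.1} to get almost-everywhere witnesses for the pseudonorms $||f_n-f_m||_I^{\infty}$, merge the countably many sets from $\zi$ and null sets using the standing assumptions, extract a pointwise (indeed uniform) limit on the resulting good set, and pass to the limit in $m$ to get pseudonorm convergence. The only cosmetic difference is that you certify $f\in\liinf$ via a uniform bound $\sup_n||f_n||_I^{\infty}$, while the paper deduces it from $g-f_{k_0}\in\liinf$ and $f_{k_0}\in\liinf$; both are valid.
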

\begin{proof}
	Let $ \{f_k\} $ be a cauchy sequence in $ \liinf $. Since  by  Lemma \ref{l-4.1} for each pair of indices $ k,j\in \mathbb{N} $, $ ||f_k-f_j||_I^{\infty} $ is an essential $ I $-upper bound of the function $ f_k-f_j $, it follows that there exists a set $ Z_{k,j}\in \zi $ and a measurable set $ E_{k,j}\subseteq Z_{k,j} $ with $ \mu(E_{k,j})=0 $ for which we can write: for each $ x\in  Z_{k,j}\setminus E_{k,j}$, $ |f_k(x)-f_j(x)|\leq ||f_k-f_j||_I^{\infty} $. Let $ Z=\bigcap\limits_{k=1}^{\infty}\bigcap\limits_{j=1}^{\infty} Z_{k,j}$ and $ E=\bigcup\limits_{k=1}^{\infty}\bigcup\limits_{j=1}^{\infty}E_{k,j} $. Then by hypothesis $ Z\in \zi $ and $ \mu(E)=0 $. Thus we can write for each $ x\in Z\setminus E $, for each $ k,j\in\mathbb{N} $, \[\ \ \ \ \ \ \ \ \ \ \   |f_k(x)-f_j(x)|\leq ||f_k-f_j||_I^{\infty} \ \ \ \ \ \ \ \ \ \ \ \ \ \ \ldots (1) \] Since $ \{f_k\}$ is initially chosen to be a cauchy sequence in $ \liinf $, it follows from the relation $ (1) $ above that, for each $ x\in Z\setminus E $, $ \{f_k(x)\}_{k=1}^{\infty} $ is a cauchy sequence in $ \R $ and hence $\lim\limits_{k\rightarrow\infty}f_k(x)  $ exists in $ \R $. Define the function $ g:X\rightarrow \R $ as follows: $ g(x)= $ $\lim\limits_{k\rightarrow \infty}f_k(x)  $ if $ x\in Z\setminus E $ and $ g(x)=0 $ otherwise. Then $ g\in \mxa $. To complete this theorem, it remains to check that $ g\in\liinf $ and $\lim\limits_{k\rightarrow \infty}f_k=g $ in this space. Choose $ \epsilon>0 $. Then there exists $ k_0\in \mathbb{N} $ such that for all $j\geq k_0 $, for all $k\geq k_0 $, $ ||f_k-f_j||_I^{\infty}<\epsilon $. This inequality in conjunction with the inequality $ (1) $ yields:  for all $j\geq k_0 $, for all $k\geq k_0 $, for all $x\in Z\setminus E $, $ |f_k(x)-f_j(x)|<\epsilon $ on taking limit as $ k\rightarrow \infty $, this further yields that:
	 \begin{alignat*}{2}
	 	\forall\ x\in Z\setminus E,\ \forall\ j\geq & k_0,\ |\lim\limits_{k\rightarrow \infty}f_k(x)-f_j(x)|\leq\epsilon \\
	  &\text{i.e.,}\ |g(x)-f_j(x)|\leq\epsilon\ \ \ \ \  \ \ \ \ \ \ldots (2)
	 \end{alignat*}
	 It follows from $ (2) $ that $ g-f_{k_0} $ is essentially $ I $-bounded on $ X $, i.e., $ g-f_{k_0} \in\liinf$. Since $ f_{k_0}\in\liinf $, it follows therefore that $ g\in\liinf $. The relation $ (2) $ further implies that for all $ j\geq k_0 $, the positive number $\epsilon$ is an essential $ I $-upper bound of the function $ f_j-g\in\liinf $. It follows that $ ||f_j-g||_I^{\infty}\leq\epsilon$ for each $ j\geq k_0 $. Hence $\lim\limits_{j\rightarrow \infty}f_j=g $ in $ \liinf $. The proof is complete.
\end{proof}
Normally the space $ L^{\infty}(\mu)$ of all essentially bounded measurable functions over a measure space $ (X,\cal{A},\mu) $ is not separable (see the comments in page 580 in \cite{ref7}). It is well-known that the space of all essentially bounded Lebesgue measurable functions over the closed interval $ [0,1] $ is not separable \cite{ref9}. In our terminology this reads on choosing $ X=[0,1] $, $ \cal{A}= $ the $ \sigma $-algebra of all Lebesgue measurable subsets of $ [0,1] $, $ \mu= $ the Lebesgue measure and $ I=\{0\} $, that $ \liinf $ is not separable. It is also known that the space $ l^{\infty} $ of all bounded sequence of real numbers (or complex numbers) with familier supremum norm is non separable. We can realize $ l^{\infty} $ as the space $ \liinf $ on specialising $ X=\mathbb{N},\ \cal{A}=P(\mathbb{N}),\ \mu= $ the counting measure on $ \mathbb{N} $ and $ I=\{0\} $. We shall exhibit a non-trivial ideal $ I\neq\{0\} $ in the ring $ \mxa $ arising out in each of the above two examples to give two examples of spaces of the form $ \liinf $, none of which is separable.
\begin{example}\label{e-4.3}
	Let $ X=[0,1] $, $ \cal{A}= $ the $ \sigma $-algebra of all Lebesgue measurable subsets of $ [0,1] $, $ \mu= $ the Lebesgue measure and $$ I=\{f\in\mxa
	: f(x)=0\ \text{for each}\ x\in[0,\frac{1}{2}]\}.$$ Let $ t_1<t_2<\ldots $ be a strictly increasing sequence in the open interval $ (0,\frac{1}{2}) $ which converges to $ \frac{1}{2} $. Let $ A_n=(t_{n-1},t_n) $, $ n\geq 1 $, writing $ t_0=0 $. Suppose $ J=\{f\in\liinf: f(t_i)=0$ for all $i\in \mathbb{N}\cup\{0\},\ f=0\ \text{on}\ [\frac{1}{2},1] $ and for each $ n\in\mathbb{N} $, either $ f(A_n)=\{0\} $ or $ f(A_n)=\{1\} $\}. We assert that $ J $ is an uncountable set. To prove this assertion, if possible let $ J $ be a countable set: $ J=\{f_1,f_2,\ldots f_n,\ldots\} $. Define a function $ g:X\rightarrow \R $ as follows $ g(\{t_0,t_1,\ldots,t_n,\ldots\}\cup [\frac{1}{2},1])=0 $ and given $ n\in \mathbb{N} $, $ g(A_n)=1 $ if $ f_n(A_n)=\{0\} $ and $ g(A_n)=0 $ if $ f_n(A_n)=1 $. Then it is easy to check that $ g\in J $, but $ g $ is different from each $ f_n $ (on a set of positive Lebesgue measure), a contradiction. We further note that whenever $ f,g\in J $ and $ f\neq g $, then there exists $ n\in\mathbb{N} $ for which $ f(A_n)=\{0\} $ and $ g(A_n)=\{1\} $, say. This implies that $ g-f=1 $ on $ A_n $ and $ \mu(A_n)>0 $. It follows that no number $ <1 $ can be an essential $ I $-upper bound of $ g-f $ over $ [0,1]=X $. In other words $ ||f-g||_I^{\infty}\geq1 $. Indeed, it is easy to check that $ ||f-g||_I^{\infty}=1 $ if $ f,g\in J $ with $ f\neq g $. If for $ h\in\liinf $ and $ \epsilon>0 $ in $ \R $, $ B(h,\epsilon)=\{g\in\liinf:||g-h||_I^{\infty}\}= $ the open ball centered at `$ h $' with radius $\epsilon$ in the pseudonormed linear space $ \liinf $ then the last observation implies that $ \{B(f,\frac{1}{3}):f\in J\} $ is an uncountable family of pairwise disjoint open balls in this space. Thus $ \liinf $ turns out to be a non-separable pseudonormed linear space.
\end{example}
\begin{example}\label{e-4.4}
	Take $ X=\mathbb{N} $, $ \cal{A}= P(\mathbb{N}) =$ the power set of $ \mathbb{N} $, $ \mu= $ the counting measure on $ (X,\cal{A}) $ and $S= \{n_k\} $ be any subsequence of $ \mathbb{N} $, i.e., $ n_1<n_2<\ldots $ is an increasing sequence of natural numbers without any upper bound.
	
	Suppose $ I=\{f\in \mxa: f(n_k)=0$ for each $k\in \mathbb{N}\} $. Then $ I $ is an ideal in the ring $ \mxa $. In this setting $ \liinf $ consists precisely all those sequences $ \{x_n\} $ of real numbers for which the subsequences $ \{x_{n_k}\} $ are bounded.
	
	Let $J=\{f\in L_I^{\infty}(\mathbb{N},P(\mathbb{N}),\mu): f(n)=0$ if $n\in\mathbb{N}\setminus S$ and for each $n_k\in S,\ f(n_k)\in\{0,1$\}. Then $ J $ is an uncountable set and for any two distinct $ f,g\in J $, it is clear that there exists some $ n_k $ for which $ f(n_k)=1 $ and $ g(n_k)=0 $ (say). Then $ 1 $ is an essential $ I $-upper bound of $ f-g $ on $ \mathbb{N} $ because $ |f-g|\leq1 $ on the whole of $ S\in Z[I] $. On the other hand no number $ \lambda\ (0<\lambda<1) $ can be an essential $ I $-upper bound of $ g-f $ on $ \mathbb{N} $ as $ \{n\in \mathbb{N}:|(g-f)(x)|>\lambda\}=\{n\in\mathbb{N}:|g(x)-f(x)|=1\}\supseteq \{n_k\} $. This implies that $ \mu\{n\in \mathbb{N}:|(g-f)(x)|>\lambda\}\geq \mu\{n_k\}=1 $ because $ \mu $ is the counting measure on $ \mathbb{N} $. Thus $ ||g-f||_I^{\infty}=1 $. Now arguing as in the previous example, taking into consideration the uncountability of the set $ J $, we can conclude that $ L_I^{\infty}(\mathbb{N},P(\mathbb{N}),\mu) $ is a non-separable Banach space.
\end{example}
 We shall conclude this article after making a few remarks in connection with the following classification of ideals in the ring $ \mxa $, effected by the $ \umui $ and $ \mmui $-topologies, with two rather special choices of the measure $ \mu $, viz the counting measure and Dirac measure. With $ \mu= $ counting measure, let us call $ \umui $-topology to be simply $ U_I $-topology and the $ \mmui $-topology as $ m_I $-topology. Also for $ f\in\mxa $, $ u\in \umuip $ and $ \epsilon>0 $ in $ \R $, we prefer to write $ U_I(f,\epsilon) $ and $ m_I(f,u) $ instead of $ \umui(f,\epsilon) $ and $ \mmui(f,u) $ respectively. To address this problem and to make this paper self-contained, we write the following background material, which we reproduce from the article \cite{ref4}.
 
 Suppose $ \widehat{X} $ is an enlargement of the set $ X $ (in any convenient manner) with the purpose that it will serve as an index set for the aggregate of all maximal ideals in the ring $ \mxa $. For each measurable set $ A\in \cal{A} $, $ \bar{A}=\{p\in\widehat{X}: A\in \mathcal{U}^p\} $, here $ M^p $ is the maximal ideal in $ \mxa $ that corresponds to the point $ p\in \widehat{X} $ and $ \cal{U}=Z[M^p] $ is the associated ultrafilter of measurable sets on $ X $. If $ p\in X $, then $\cal{U}^p=\cal{U}_p=\{E\in \cal{A}:p\in E\}=$ the fixed ultrafilter of measurable sets on $ X $ corresponding to the point $ p $. Also $ \bar{A}\cap X=A $ for each $ A\in\cal{A} $. The family $ \{\bar{A}:A\in \cal{A}\} $ constitutes a base for the closed sets of some topology, familiarly known as the Stone-topology on $ \widehat{X} $. Using this topology on $ \widehat{X} $, a complete description of the maximal ideals in $ \mxa $ is provided by the following list $ \{M^p:p\in \widehat{X}\} $, where for $ p\in \widehat{X}$, $ M^p=\{f\in\mxa:p\in$ cl$_{\widehat{X}}Z(f)\}$. This may be called the measure theoretic analogue of Gelfand-Kolmogoroff theorem in the theory of rings of continuous functions. Since every ideal $ I $ in $ \mxa $ happens to be an intersection of maximal ideals \cite{ref4}, we can write $ I=\bigcap\limits_{p\in Y} M^p $ for some subset $ Y $ of $ \widehat{X} $.
 
 We now define two binary relations `$ \sim $' and `$ \approx $' on the set $ \cal{I} $ of all ideals in $ \mxa $ as follows: for $ I,J\in\cal{I} $, $ I\sim J $ if and only if $ m_I $-topology = $ m_J $-topology, and $ I\approx J $ if and only if $ U_I $-topology = $ U_J $-topology. Clearly `$ \sim $' and `$ \approx $' are equivalence relations on $ \cal{I} $.
 
 The following theorem essentially says that, these two classifications of ideals in $ \mxa $ via `$ \sim $' and `$ \approx $', albeit outwardly different are essentially the same. Indeed, each equivalence class in both the quotient sets $ \cal{I}/_{\sim} $ and $ \cal{I}/_{\approx} $ is a singleton.
\begin{theorem}\label{t-4.5}
	For two ideals $ I $ and $ J $ in $ \cal{I} $, $ I\sim J $ if and only if $ I\approx J $ if and only if $ I=J $.
\end{theorem}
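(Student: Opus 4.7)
The implications $I = J \Rightarrow I \sim J$ and $I = J \Rightarrow I \approx J$ are immediate from the definitions, so the task is to extract $I$ from either topology. My plan is to read $\zi$ off the intrinsic topological invariant $\overline{\{0\}}$ (the closure of the zero function) by testing indicator functions, and then to recover $I$ from $\zi$ via the fact that in $\mxa$ every ideal is automatically a $z$-ideal.

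\emph{Step 1 ($z$-ideals and superset-closure of $\zi$).} Given $g \in I$ and $f \in \mxa$ with $Z(g) \subseteq Z(f)$, define $h(x) = f(x)/g(x)$ on $X \setminus Z(g)$ and $h(x) = 0$ on $Z(g)$. Since $Z(g) \in \cal{A}$ and $g$ is non-vanishing on $X \setminus Z(g)$, this $h$ is measurable and $hg = f$, so $f \in I$. For the superset claim, if $f \in I$ and a measurable $Z'$ contains $Z(f)$, then $f \cdot \chi_{X \setminus Z'} \in I$ has zero-set exactly $Z'$. Consequently $\zi = Z[J]$ forces $I = J$.

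\emph{Step 2 (topological recovery of $\zi$).} Because $\mu$ is the counting measure, every ``$\mu$-null'' clause in $m_I(f,u)$ and $U_I(f,\epsilon)$ is vacuous, so the basic neighbourhoods just demand honest pointwise inequalities on some $Z \in \zi$. For each measurable $Z \subseteq X$ I test the indicator $\chi_{X \setminus Z}$ against $\overline{\{0\}}$ and claim
\[
Z \in \zi \;\iff\; \chi_{X \setminus Z} \in \overline{\{0\}}_{U_I} \;\iff\; \chi_{X \setminus Z} \in \overline{\{0\}}_{m_I}.
\]
The forward direction is direct: $Z$ itself witnesses $\chi_{X \setminus Z} \in U_I(0,\epsilon)$ for every $\epsilon$, and $Z \cap Z_u$ (for any positivity witness $Z_u \in \zi$ of $u \in \umuip$) witnesses $\chi_{X \setminus Z} \in m_I(0,u)$. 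The reverse follows by testing with $\epsilon = 1/2$ (respectively $u \equiv 1/2 \in \umuip$): any $Z' \in \zi$ satisfying $|\chi_{X \setminus Z}| < 1/2$ on $Z'$ must lie inside $Z$, and superset-closure from Step 1 promotes $Z$ itself into $\zi$.

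Since $\overline{\{0\}}$ is a purely topological invariant, $m_I = m_J$ (respectively $U_I = U_J$) forces the corresponding closures to coincide, Step 2 then yields $\zi = Z[J]$, and Step 1 yields $I = J$; together with the trivial converses this proves the triple equivalence. The only place that needs real scrutiny is Step 1, the $z$-ideal property: it fails in $C(X)$ because continuous inverses need not extend, so the whole argument hinges on the measurability of $1/g$ on $X \setminus Z(g)$ and the freedom to patch it with $0$ on $Z(g)$, and it is precisely this measurable-category flexibility that makes the counting-measure setting so rigid.
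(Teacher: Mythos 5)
Your proposal is correct, but it reaches the conclusion by a genuinely different route than the paper. The paper works through the structure space $\widehat{X}$: it invokes the fact (from its reference on ideals in $\mxa$) that every ideal is an intersection of maximal ideals $M^p=\{f: p\in \mathrm{cl}_{\widehat{X}}Z(f)\}$, picks $f\in J\setminus I$, extracts a point $q$ and a measurable set $A\supseteq Z(f)$ with $q\notin\bar{A}$ from the Stone topology, and then shows that $m_I(0,\frac{1}{2})$ fails to be open in the $m_J$-topology by testing the characteristic function $\chi_{X\setminus A}$; the $U$-case is handled by a parallel remark. You instead prove directly and self-containedly that every ideal of $\mxa$ is a $z$-ideal whose zero-set filter is closed under measurable supersets (your $h=f/g$ patching argument is valid precisely because measurability, unlike continuity, survives pasting over $Z(g)$), so that $I$ is recovered from $Z[I]$; you then exhibit $Z[I]$ as the topological invariant $\{Z\in\mathcal{A}:\chi_{X\setminus Z}\in\overline{\{0\}}\}$, using that the counting measure trivializes all almost-everywhere clauses, which handles the relations $\sim$ and $\approx$ simultaneously. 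Both proofs ultimately rest on the same fact --- an ideal of $\mxa$ is determined by its zero-set filter --- but the paper imports it via the Gelfand--Kolmogoroff-type machinery while you prove the needed fragment from scratch; your version is more elementary, avoids the structure space entirely, and unifies the two topologies through the single invariant $\overline{\{0\}}$, whereas the paper's version situates the theorem inside the maximal-ideal framework it has already built up. The only points deserving care in your write-up --- the symmetry $0\in U_I(f,\epsilon)\iff f\in U_I(0,\epsilon)$ implicit in your use of $\overline{\{0\}}$, and the membership $\underline{1/2}\in U_{\mu_I}^{+}$ --- are both immediate, so no gap remains.
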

\begin{proof}
	If $ I=J $, then it is trivial that $ I\sim J $ and $ I\approx J $. To prove the non-trivial part of this theorem assume that $ I\sim J $. we shall prove that $ J\subseteq I $, from symmetry it will follow that $ I\subseteq J $ and hence $ I=J $. If possible let $ J\nsubseteq I $. We can choose $ f\in J $ such that $f \notin I $. Now $ I=\bigcap\limits_{p\in Y}M^p $ for some subset $ Y $ of $ \widehat{X} $. Therefore $ f\notin M^q $ for some point $ q\in \widehat{X} $. It follows on using the above Gelfand-Kolmogoroff like theorem that $ q\notin$ cl$_{\widehat{X}}Z(f) $. But as $ \{\bar{A}:A\in\cal{A}\} $ constitutes a closed base for the Stone-topology on $ \widehat{X} $, this further implies that there exists $ A\in \cal{A} $ such that $ q\notin \bar{A} $ yet cl$_{\widehat{X}}Z(f)\subseteq \bar{A}  $. It follows that cl$_{\widehat{X}}Z(f)\cap X\subseteq \bar{A}\cap X$, this means that $ Z(f)\subseteq A $.
	
	Let $ h\in\mxa $ be the characteristic function of $ X\setminus A $, i.e., $ h=\chi_{X\setminus A} $. Now $ m_I(0,\frac{1}{2}) $ is open in $ \mxa $ in the $ m_I $-topology. To get the desired contradiction we shall check that $ m_I(0,\frac{1}{2}) $ is not open in the $ m_J $-topology and hence $ I $ is not related to $ J $ by the binary relation `$\sim $' on $ \cal{I} $. If possible let  $ m_I(0,\frac{1}{2}) $ be open in the $ m_J $-topology. Then there exists $ v\in U_{\mu_J}^+=U_J^+ $ such that $ m_J(0,v)\subseteq m_I(0,\frac{1}{2}) $. We note that $ v>0 $ on some $ Z_0\in Z[J] $. Now since $ f\in J $, if follows that $ Z(f)\cap Z_0\in Z[J] $. We recall that $ Z(f)\subseteq A $, from which it follows that $ h=0 $ on $ Z(f) $ and hence $ h=0 $ on $ Z(f)\cap Z_0\in Z[J] $. Since $ v $ is already `$ 0 $' on this last measurable set, this implies that $ h\in m_J(0,v)\subseteq m_I(0,\frac{1}{2}) $. Therefore there exists $ g\in I $ such that $ |h(x)|<\frac{1}{2} $ for each $ x\in Z(g) $ on the other hand $ I=\bigcap\limits_{p\in Y}M^p $ implies that $ g\in M^q $ and hence $ q\in  $ cl$_{\widehat{X}} Z(g)$. But as $ q\notin \bar{A} $, we get that $ \widehat{X}\setminus \bar{A} $ is an open neighbourhood of $ q $ in the space $ \widehat{X} $. This surely implies that $ (\widehat{X}\setminus \bar{A})\cap Z(g)\neq \emptyset $. Since $ Z(g)\subseteq X $, this further implies that $ (X\setminus A)\cap Z(g)\neq\emptyset $. Choose a point $ x\in (X\setminus A)\cap Z(g)$. Then $ x\in X\setminus A $ implies that $ h(x)=1 $. On the other hand $ x\in Z(g) $ implies that $ |h(x)|<\frac{1}{2} $, this is a contradiction. Thus far we have proved that $ I=J $ if and only if $ m_J $-topology = $ m_I $-topology on $ \mxa $.
	
	Now if we make a close scrutiny in the above arguments, then it is revealed that the assumption $ J\nsubseteq I $, made at the beginning of the proof of the present theorem, would have led to the conclusion that the open set $ U_I(0,\frac{1}{2}) $ in $ \mxa $ in the $ U_I $-topology (we write $ \umui(0,\frac{1}{2})=U_I(0,\frac{1}{2}) $) is not open in the $ m_I $-topology and consequently not open in $ U_I $-topology (because $ U_I $-topology $\subseteq$ $ m_I $-topology). Thus it is proved that $ I\approx J $ if and only if $ I=J $. The theorem is completely proved.
\end{proof}
Our next classification of ideals in $ \mxa $ is with respect to the choice $ \mu= $ an appropriately defined Dirac measure. So let us prepare our set up once again. Here $ X $ is an infinite set, $ \cal{A} =P(X)$, the family of all subsets of $ X $ and for any chosen $ x_0\in X $, the measure $ \delta_{x_0}:P(X)\rightarrow \{0,1\} $ is defined as follows: $  \delta_{x_0}(E)=\{1\} $ if $ x_0\in E $, $  \delta_{x_0}(E)=\{0\} $ if $ x_0\notin E $. $  \delta_{x_0} $ is called a Dirac measure on $ X $. As in the preceeding theorem $ \cal{I} $ stands for the set of all ideals in the measure space $ (X,P(X),  \delta_{x_0}) $. In view of the additional assumption (2) made at the beginning of this section, for any $ I\in \cal{I} $ and for each $ E\in\zi $, $  \delta_{x_0}(E)>0 $, this means that $ x_0\in E $. Thus $ x_0\in \bigcap Z[I] $. Now as in the preceeding theorem define the binary relation `$ \sim $' and `$ \approx $' on $ \cal{I} $ as follows: for $ I,J\in \cal{I} $, $ I\sim J $ if and only if $ m_{{ \delta_{x_0}}_I} $-topology = $ m_{{ \delta_{x_0}}_J} $-topology on $ \cal{M}(X,P(X),  \delta_{x_0}) $. Here we write $ m_{{ \delta_{x_0}}_I} $ in palce of $ \mmui $ on choosing $ \mu= \delta_{x_0} $. Also writing $ U_{{ \delta_{x_0}}_I}$-topology = $\umui $-topology with this choice of $ \mu $, we define our second binary relation `$ \approx $' on $ \cal{I} $ as follows: for $ I,J\in \cal{I} $, $ I\approx J $ if and only if $ U_{{ \delta_{x_0}}_I} $-topology = $ U_{{ \delta_{x_0}}_J} $-topology on $ \cal{M}(X,P(X),  \delta_{x_0}) $. It is easy to check that for any $ f\in \cal{M}(X,P(X)) $, $ u\in U_{{ \delta_{x_0}}_I}^+ $ and $ \epsilon>0 $ in $ \R $, $ m_{{ \delta_{x_0}}_I}(f,u)=m_{{ \delta_{x_0}}_J}(f,u) $ and $ U_{{ \delta_{x_0}}_I}(f,\epsilon) =U_{{ \delta_{x_0}}_J}(f,\epsilon)$ for any two ideals $ I $ and $ J $ in the family $ \cal{I} $. This leads to the following proposition which as something far opposite to what happens in Theorem \ref{t-4.5} that each of the quotient sets $ \cal{I}/_{\sim} $ and $ \cal{I}/_{\approx} $, degenerates into a one-membered set.
\begin{theorem}\label{t-4.6}
	For any two ideals $ I $ and $ J $ in $ \cal{I} $, $ I\sim J $ and also $ I\approx J $.
\end{theorem}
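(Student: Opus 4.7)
The plan is to show that, when $\mu = \delta_{x_0}$, the basic neighbourhoods that generate either the $m_{{\delta_{x_0}}_I}$-topology or the $U_{{\delta_{x_0}}_I}$-topology on $\mathcal{M}(X, P(X), \delta_{x_0})$ depend only on the distinguished point $x_0$, and not on the ideal $I$. Once this is established, both equivalences $I \sim J$ and $I \approx J$ follow immediately, since the two topologies will coincide for every choice of $I \in \mathcal{I}$.

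First I would record the two elementary facts about the Dirac measure that drive the argument. By the standing assumption (2) made at the start of Section 4, $\delta_{x_0}(E) > 0$ for every $E \in Z[I]$; since $\delta_{x_0}$ takes only the values $0$ and $1$, this forces $x_0 \in E$, so $x_0 \in \bigcap Z[I]$. Conversely, for any $E \subseteq X$, $\delta_{x_0}(E) = 0$ if and only if $x_0 \notin E$.

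The key step is a reduction to the single point $x_0$: given any $Z \in Z[I]$, set $E_0 := Z \setminus \{x_0\}$. Then $x_0 \notin E_0$, so $\delta_{x_0}(E_0) = 0$, while $Z \setminus E_0 = \{x_0\}$. Using this witness for the ``almost everywhere'' clauses in the definitions, one finds that $u \in U_{{\delta_{x_0}}_I}^+$ exactly when $u(x_0) > 0$; that $g \in m_{{\delta_{x_0}}_I}(f, u)$ exactly when $|g(x_0) - f(x_0)| < u(x_0)$; and that $g \in U_{{\delta_{x_0}}_I}(f, \epsilon)$ exactly when $|g(x_0) - f(x_0)| < \epsilon$. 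For the forward implications one uses that any witnessing set $Z \setminus E$ must contain $x_0$ (else $\delta_{x_0}(E) = 1$); for the reverse implications, one simply picks an arbitrary $Z \in Z[I]$ together with $E_0$.

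Each of these three characterisations makes no reference to $I$ beyond the trivial fact, already noted, that every set in $Z[I]$ contains $x_0$. Consequently the open bases of the $m_{{\delta_{x_0}}_I}$- and $U_{{\delta_{x_0}}_I}$-topologies are identical to those of $m_{{\delta_{x_0}}_J}$ and $U_{{\delta_{x_0}}_J}$ for any other $J \in \mathcal{I}$, yielding $I \sim J$ and $I \approx J$ at once. I expect no real obstacle here: the entire content is the slogan that ``almost everywhere with respect to $\delta_{x_0}$'' means ``at the point $x_0$'', so the filter $Z[I]$ plays no role beyond supplying some set that contains $x_0$.
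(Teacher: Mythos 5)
Your proposal is correct and is essentially the paper's argument: the paper simply asserts that $m_{{\delta_{x_0}}_I}(f,u)=m_{{\delta_{x_0}}_J}(f,u)$ and $U_{{\delta_{x_0}}_I}(f,\epsilon)=U_{{\delta_{x_0}}_J}(f,\epsilon)$ for all $I,J\in\mathcal{I}$, which rests on exactly the reduction you spell out, namely that under assumption (2) every $Z\in Z[I]$ contains $x_0$ and ``almost everywhere with respect to $\delta_{x_0}$'' means ``at $x_0$'', so all basic neighbourhoods (and the set $U^+_{{\delta_{x_0}}_I}$ of admissible units) are described by conditions at $x_0$ alone. Your write-up just makes the ``easy to check'' step explicit.
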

\begin{question}
The following problems are still open.
	\begin{enumerate}
		\item Suppose an ideal $ I $ in $ \mxa $ is such that the pseudonormed linear sapce $ \liinf $ is complete. Is the filter $ Z[I] $ closed under countable intersection?
		\item Does there exist a measure space $ (X,\cal{A},\mu) $ and a non-zero ideal $ I $ in $ \cal{M}(X,\cal{A},\mu) $ for which the pseudonormed linear space $ \liinf $ is separable?
	\end{enumerate}
\end{question}
	
\end{document}